\newtheorem{theorem}{Theorem}[section]
\newtheorem{lemma}[theorem]{Lemma}
\newtheorem{corollary}[theorem]{Corollary}
\newtheorem{example}[theorem]{Example}
\newtheorem{assumption}[theorem]{Assumption}
\newcommand{\C}{{\mathbb C}}
\newcommand{\calK}{{\mathcal K}}
\newcommand{\calW}{{\mathcal W}}
\title{A Krylov subspace method for the approximation of \\ bivariate matrix functions}
\author{Daniel Kressner}
\date{\today}
\begin{document}
\maketitle
\begin{abstract}
Bivariate matrix functions provide a unified framework for various tasks in numerical linear algebra, including the solution of linear matrix equations and the application of the Fr\'echet derivative. In this work, we propose a novel tensorized Krylov subspace method for approximating such bivariate matrix functions and analyze its convergence. While this method is already known for some instances, our analysis appears to result in new convergence estimates and insights for all but one instance, Sylvester matrix equations. 
\end{abstract}

\section{Introduction}

Given a univariate function $f(z)$ defined in the neighborhood of the spectrum $\Lambda(A)$ of a matrix $A\in \C^{n\times n}$, the numerical computation of the \emph{matrix function} $f(A) \in \C^{n\times n}$ has been studied intensively during the last decades; see~\cite{Frommer2008a,Guettel2013,Higham2008} for surveys.
The extension of the notion of matrix functions to bivariate or, more generally, multivariate functions $f$  has a long history as well, notably in the context of holomorphic functional calculus and operator theory; see~\cite[Sec. 3]{Kressner2014bivariate} for a detailed discussion and references. In the numerical analysis literature, however, bivariate matrix functions have been discussed mostly for special cases only. 

Given two matrices $A\in \C^{m\times m}$ and $B\in \C^{n\times n}$ and a bivariate function $f(x,y)$ defined in a neighourhood of $\Lambda(A)\times\Lambda(B)$, the \emph{bivariate matrix function} $f\{A,B\}$ is a linear operator on $\C^{m\times n}$. We will recall the formal definition of $f\{A,B\}$ in Section~\ref{sec:prelims} below. Linear matrix equations and Fr\'echet derivatives constitute the most widely known instances of bivariate matrix functions:
\begin{enumerate}
 \item  For $f(x,y) = 1/(x+y)$ the matrix $X = f\{A,B\}(C)$ is the solution of the Sylvester matrix equation
\begin{equation} \label{eq:sylvester}
  AX + XB^T = C, \qquad C \in \C^{m\times n},
\end{equation}
where $B^T$ denotes the complex transpose of $B$ and $C$ is often of low rank.
When $B$ equals $\bar A$ and $C$ is Hermitian,~\eqref{eq:sylvester} is called Lyapunov matrix equation. Such matrix equations play an important role in control, e.g, for computing the Gramians in balanced truncation model reduction of linear time-invariant control systems. They also arise from structured discretizations of partial differential equations. See~\cite{Benner2013,Simoncini2016} for references.

\item There are several variants of~\eqref{eq:sylvester} that fit the framework of bivariate matrix functions. The solution of the Stein equation $AXB^T -X = C$ is given by $X = f\{A,B\}(C)$ with $f(x,y) = 1/(1-xy)$. More generally, for $f(x,y) = 1/p(x,y)$, with a bivariate polynomial $p(x,y) = \sum_{i = 0}^k \sum_{j = 0}^\ell p_{ij} x^i y^j$, the matrix $X = f\{A,B\}(C)$ is the solution of the matrix equation
\[
 \sum_{i = 0}^k \sum_{j = 0}^\ell p_{ij} A^i X (B^T)^j = C,
\]
which has been considered, e.g., in~\cite{Dalecki1974,Lancaster1970}. 

Time-limited and frequency-limited balanced truncation model reduction~\cite{Benner2016limitedgramians,Gawronski1990} give rise to matrix equations that involve matrix exponentials and logarithms. For example, the reachability Gramian corresponding to a time interval $0\le t_s < t_e \le \infty$ satisfies an equation of the form
\begin{equation} \label{eq:timelimitedeqn}
 AX + XA^* = -\exp(t_sA)C \exp(t_sA^*) + \exp(t_eA)C \exp(t_eA^*),
\end{equation}
where again $C$ is often of low rank. The solution of~\eqref{eq:timelimitedeqn} can be expressed as $X = f\{A,\bar A\}(C)$ with
\begin{equation} \label{eq:timelimited}
 f(x,y) = \frac{\exp(t_e(x+y))-\exp(t_s(x+y))}{x+y}.
\end{equation} In the analogous situation for frequency-limited balanced truncation, the corresponding function takes the form
\begin{equation} \label{eq:frequencylimited}
  f(x,y) = -\frac{g(x)+g(y)}{x+y},\quad g(z) = \mathsf{Re}\Big( \frac{\mathrm{i}}{\pi}\ln\Big(\frac{z+\mathrm{i}\omega_2}{z+\mathrm{i}\omega_1} \Big)  \Big), \quad 0\le\omega_1<\omega_2\le\infty,
\end{equation}
where $\mathsf{Re}$ denotes the real part of a complex number.

\item Given a (univariate) matrix function $f(A)$ and the finite difference quotient
\begin{equation}
\label{eq:finitedifference}
f^{[1]}(x,y):=f[x,y] = \left\{
\begin{array}{ll}
 \frac{f(x)-f(y)}{x-y}, & \text{for } x\not=y, \\
 f^\prime(x), & \text{for } x=y, \\
\end{array}
\right.                                          
\end{equation}
the matrix $X = f^{[1]}\{A,A^T\}(C)$ is the Fr\'echet derivative of $f$ at $A$ in direction $C$; see~\cite[Thm. 5.1]{Kressner2014bivariate}.
\end{enumerate}

In this work, we consider the numerical approximation of $f\{A,B\}(C)$ for large matrices $A$ and $B$. As the size of the involved matrices grows, it becomes necessary to impose additional structure before attempting this task. We assume that matrix-vector multiplications with $A$ and $B$ are feasible because, for example, $A$ and $B$ are sparse. Moreover, $C$ is assumed to have low rank. The latter is a common assumption in numerical solvers for large-scale matrix equations~\eqref{eq:sylvester}, but we also refer to~\cite{Grasedyck2003a,Haber2016,KressnerMasseiRobol2017,Palitta2017} for works that consider other types of data-sparsity for $C$.

Given a rank-one matrix $C = cd^T$, the method proposed in this paper makes use of the two Krylov subspaces generated by the matrices $A,B$ with starting vectors $c,d$. An approximation to $f\{A,B\}(C)$ is then selected from the tensor product of these two subspaces.
Our method already exists for several of the instances mentioned above. For $f(x,y) = 1/(x+y)$, it corresponds to a widely known Krylov subspace method for Lyapunov and Sylvester equations~\cite{Jaimoukha1995,Saad1990}. For the functions~\eqref{eq:timelimited} and~\eqref{eq:frequencylimited}, our method corresponds to the Krylov subspace methods presented in~\cite{Kurschner2017} and~\cite{Benner2016limitedgramians}, respectively.
For the Fr\'echet derivative, the algorithm presented in this paper has been proposed independently in~\cite{Kandolfprep}. For Lyapunov and Sylvester equations, the convergence of these methods has been analyzed in detail; see, e.g.,~\cite{Beckermann2011,Simoncini2009a}. For all other instances, the general theory presented in this work appear to result in previously unknown convergence estimates.

We note in passing that the algorithm proposed in this paper shares similarities with a recently proposed Krylov subspace method for performing low-rank updates of matrix functions~\cite{BeckermannKressnerSchweitzer2017}. However, unlike Fr\'echet derivatives, it does not seem to be possible to capture low-rank updates within the presented framework for bivariate matrix functions.

\section{Preliminaries} \label{sec:prelims}

We first recall the definition of bivariate matrix functions and their basic properties from~\cite{Kressner2014bivariate}.
Let $\Pi_{k,\ell}$ denote the set of all bivariate polynomials of degree at most $(k,\ell)$, that is, for $p \in \Pi_{k,\ell}$  we have that 
$p(x,y)$ has degree at most $k$ in $x$ and degree at most $\ell$ in $y$.
Every such polynomial takes the form
\[
 p(x,y) = \sum_{i = 0}^k \sum_{j = 0}^\ell p_{ij} x^i y^j, \qquad p_{ij} \in \C.
\]
The bivariate matrix function corresponding to $p$ and evaluated at $A \in \C^{m\times m}$, $B\in \C^{n\times n}$ is defined as
\begin{equation} \label{eq:polynomialform}
  p\{A,B\}: \C^{m\times n} \to \C^{m\times n}, \qquad p\{A,B\}(C):=\sum_{i = 0}^k \sum_{j = 0}^\ell  A^i C (B^T)^j.
\end{equation}
This definition extends via Hermite interpolation to general functions $f$ that are sufficiently often differentiable at the eigenvalues of $A$ and $B$; see~\cite[Def. 2.3]{Kressner2014bivariate} for details. A more compact and direct definition is possible when $f$ is analytic.
\begin{assumption} \label{assumption:holo}
There exist domains $\Omega_A,\Omega_B \subset \C$ containing the eigenvalues of $A$ and $B$, respectively, such that 
$f_y(x):=f(x,y)$ is analytic in $\Omega_A$ for every $y \in \Omega_B$ and  $f_x(y):=f(x,y)$ is analytic in $\Omega_B$ for every $x \in \Omega_A$. 
\end{assumption}
\noindent By Hartog's theorem~\cite{Krantz1982}, Assumption~\ref{assumption:holo} implies that $f$ is analytic in $\Omega_A\times \Omega_B$. Moreover, we have \begin{equation} \label{eq:contour}
 f\{A,B\}(C) = -\frac{1}{4\pi^2} \oint_{\Gamma_A} \oint_{\Gamma_B} f(x,y) (xI-A)^{-1} C (yI-B^T)^{-1}\,\text{d}y\,\text{d}x,
\end{equation}
where $\Gamma_A\subset \Omega_A$ and $\Gamma_B \subset \Omega_B$ are closed contours enclosing the eigenvalues of $A$ and $B$, respectively.

Diagonalizing one of the two matrices $A,B$ relates bivariate matrix functions to (univariate) matrix functions of the other matrix. A similar result has already been presented in~\cite[Sec. 6]{Kressner2014bivariate}; we include its proof for the sake of completeness.
\begin{lemma} \label{lemma:diagB} Suppose that Assumption~\ref{assumption:holo} holds and that there is an an invertible matrix $Q$ such that $Q^{-1} B Q  = \mathrm{diag}(\mu_1,\ldots,\mu_n)$. Then
 \[
  f\{A,B\}(C) = \begin{bmatrix}
                 f_{\mu_1}(A) \tilde c_1 & f_{\mu_2}(A) \tilde c_2 & \cdots & f_{\mu_n}(A) \tilde c_n
                \end{bmatrix} Q^T,
 \]
 with $C Q^{-T} =: \tilde C = \begin{bmatrix}
                   \tilde c_1 & \cdots & \tilde c_n
                  \end{bmatrix}
 $ and $f_\mu:=f(x,\mu)$.
\end{lemma}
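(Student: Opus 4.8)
The plan is to work directly from the double-contour representation~\eqref{eq:contour} and to exploit the fact that diagonalizing $B$ turns the resolvent $(yI-B^T)^{-1}$ into a diagonal matrix sandwiched between the eigenvector matrices. Writing $D := \mathrm{diag}(\mu_1,\dots,\mu_n)$, the identity $Q^{-1}BQ = D$ transposes to $Q^T B^T Q^{-T} = D$, so that $B^T = Q^{-T} D Q^T$ and consequently
\[
 (yI - B^T)^{-1} = Q^{-T}(yI-D)^{-1}Q^T, \qquad (yI-D)^{-1} = \mathrm{diag}\Big(\tfrac{1}{y-\mu_1},\dots,\tfrac{1}{y-\mu_n}\Big).
\]
Substituting this into~\eqref{eq:contour}, pulling the constant matrix $Q^T$ out of both integrals on the right, and absorbing $C Q^{-T} = \tilde C$ to the left of the diagonal factor, reduces the claim to showing that the bracketed matrix
\[
 M := -\frac{1}{4\pi^2}\oint_{\Gamma_A}\oint_{\Gamma_B} f(x,y)\,(xI-A)^{-1}\,\tilde C\,(yI-D)^{-1}\,\mathrm{d}y\,\mathrm{d}x
\]
has $j$-th column equal to $f_{\mu_j}(A)\tilde c_j$, since then $f\{A,B\}(C) = M Q^T$ is precisely the asserted formula.

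Next I would compute $M$ column by column. Because $(yI-D)^{-1}$ is diagonal, the $j$-th column of $\tilde C (yI-D)^{-1}$ is simply $(y-\mu_j)^{-1}\tilde c_j$, so the $j$-th column of $M$ decouples into
\[
 -\frac{1}{4\pi^2}\oint_{\Gamma_A} (xI-A)^{-1}\tilde c_j\left(\oint_{\Gamma_B}\frac{f(x,y)}{y-\mu_j}\,\mathrm{d}y\right)\mathrm{d}x.
\]
The inner integral is handled by Cauchy's integral formula: since each eigenvalue $\mu_j$ of $B$ (equivalently of $D$) lies inside $\Gamma_B$ and, by Assumption~\ref{assumption:holo}, $y \mapsto f(x,y)$ is analytic on $\Omega_B \supset \Gamma_B$ for every $x \in \Gamma_A \subset \Omega_A$, one obtains $\oint_{\Gamma_B} f(x,y)/(y-\mu_j)\,\mathrm{d}y = 2\pi\mathrm{i}\,f(x,\mu_j)$. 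Feeding this back and using $-4\pi^2 = (2\pi\mathrm{i})^2$, the $j$-th column of $M$ collapses to $\frac{1}{2\pi\mathrm{i}}\oint_{\Gamma_A} f_{\mu_j}(x)(xI-A)^{-1}\,\mathrm{d}x\,\tilde c_j$, which is exactly $f_{\mu_j}(A)\tilde c_j$ by the standard contour definition of the univariate matrix function. Here it matters that $f_{\mu_j}(x) = f(x,\mu_j)$ is itself analytic on $\Omega_A$, which is again guaranteed by Assumption~\ref{assumption:holo} because $\mu_j \in \Omega_B$.

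The only real care needed is bookkeeping: tracking the constant $-1/(4\pi^2) = 1/(2\pi\mathrm{i})^2$ and the orientation of $\Gamma_B$ so that Cauchy's formula produces $f(x,\mu_j)$ with the correct sign, and justifying the interchange of the two contour integrals — legitimate here since $f$ is jointly analytic on $\Omega_A\times\Omega_B$ by Hartog's theorem and the integrands are continuous on the compact product of contours. I expect this Fubini-type justification together with the sign/constant matching to be the main, though routine, obstacle; the algebraic structure of the argument is otherwise essentially forced. As an alternative that avoids contour integrals altogether, one could first verify the identity for polynomials $p\{A,B\}$ directly from~\eqref{eq:polynomialform}, using $(B^T)^j = Q^{-T}D^jQ^T$ and the resulting column-wise relation $p_{\mu_k}(A)\tilde c_k$, and then extend to general analytic $f$ through the Hermite-interpolation definition of $f\{A,B\}$; I would nonetheless favour the contour argument, as it dispatches the general case in a single stroke.
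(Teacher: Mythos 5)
Your proposal is correct and follows essentially the same route as the paper: both substitute the diagonalization of $B^T$ into the double-contour representation~\eqref{eq:contour}, evaluate the inner integral over $\Gamma_B$ by Cauchy's formula to produce $\mathrm{diag}(f_{\mu_1}(x),\dots,f_{\mu_n}(x))$, and then read off the columns as $f_{\mu_j}(A)\tilde c_j$ via the univariate contour representation. The sign/constant bookkeeping and the column-wise decoupling you describe match the paper's argument exactly.
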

\begin{proof}
 Setting $\Lambda_B = \mathrm{diag}(\mu_1,\ldots,\mu_n)$, we obtain from~\eqref{eq:contour} that
 \begin{eqnarray*}
  f\{A,B\}(C) &=& -\frac{1}{4\pi^2} \oint_{\Gamma_A}  (xI-A)^{-1} \tilde C \Big[ \oint_{\Gamma_B} f(x,y) (yI-\Lambda_B)^{-1}\,\text{d}y \Big] Q^T\,\text{d}x \\
  &=& \frac{1}{2\pi \mathrm{i}} \oint_{\Gamma_A}  (xI-A)^{-1} \tilde C \cdot \mathrm{diag}(f_{\mu_1}(x),\ldots, f_{\mu_n}(x) )Q^T\, \text{d}x \\
  &=& \frac{1}{2\pi \mathrm{i}} \oint_{\Gamma_A} \begin{bmatrix}
                  f_{\mu_1}(x) (xI-A)^{-1} \tilde c_1 &  \cdots &  f_{\mu_n}(x) (xI-A)^{-1} \tilde c_n
                \end{bmatrix} Q^T\, \text{d}x,
 \end{eqnarray*}
 which concludes the proof, using the contour integral representation of $f_{\mu}(A)$.
\end{proof}
\noindent For the case $f(x,y) = 1/(x+y)$, the result of Lemma~\ref{lemma:diagB} is related to algorithms for Sylvester equation with large $m$ but relatively small $n$; see~\cite{Simoncini1996}.

If both $A,B$ are diagonalizable, that is, additionally to the assumption of Lemma~\ref{lemma:diagB} there exists an invertible matrix $P$ such that $P^{-1} A P  = \text{diag}(\lambda_1,\ldots,\lambda_m)$ then the result of the lemma implies 
\[
 f\{A,B\}(C) = P \left( \begin{bmatrix}
                         f(\lambda_1,\mu_1) & \cdots & f(\lambda_1,\mu_n) \\
                         \vdots & & \vdots \\
                         f(\lambda_m,\mu_1) & \cdots & f(\lambda_m,\mu_n)
                        \end{bmatrix} \circ C
 \right)Q^T, \qquad \tilde C:= P^{-1}C Q^{-T},
\]
where $\circ$ denotes the elementwise (or Hadamard) product.

\section{Algorithm} 

For the sake of simplifying the presentation, we assume that $C$ has rank $1$ and can thus be written as $C = c d^T$ for nonzero vectors $c, d\in \C^n$. We comment on the extension to (small) ranks larger than $1$ below.

Our method proceeds by constructing orthonormal bases for the Krylov subspaces
\[
\calK_k(A,b) = \text{span}\big\{c,Ac,\ldots,A^{k-1}c\big\}, \qquad 
\calK_\ell(B,d) = \text{span}\big\{d,Bd,\ldots,B^{\ell-1}d\big\},
\]
When $k\le m$ and $\ell \le n$, these subspaces are generically of dimension $k$ and $\ell$, which will be assumed in the following.
The Arnoldi method~\cite{Stewart2001} applied to $\calK_k(A,b)$, $\calK_\ell(B,d)$ not only produces orthonormal bases $U_k \in \C^{m\times k}$, $V_\ell \in \C^{n\times \ell}$
but also yields Arnoldi decompositions
\begin{eqnarray}
 AU_k &=& U_k G_k + g_{k+1,k} u_{k+1} e_k^T, \label{eq:arnoldia} \\ 
 BV_\ell &=& V_\ell H_\ell + h_{\ell+1,\ell} v_{\ell+1} e_\ell^T, \label{eq:arnoldib}
\end{eqnarray}
where $G_k = U_k^\ast A U_k$ and $H_\ell = V_\ell^\ast B V_\ell$ are upper Hessenberg matrices, $e_k$ and $e_\ell$ denote the $k$th and $\ell$th unit vectors of suitable length, $g_{k+1,k}$ and $h_{\ell+1,\ell}$ are complex scalars. If $k<m$ and $\ell < n$ then $[U_k,u_{k+1}]$ and $[V_\ell,u_{\ell+1}]$ form orthonormal bases of 
$\calK_{k+1}(A,b)$ and $\calK_{\ell+1}(B,d)$, respectively.

We search for an approximation to $f\{A,B\}(C)$ in $\calK_k(A,b) \times \calK_\ell(B,d)$. Every such approximation takes the form $U_k X_{k,\ell} V_\ell^T$ with some matrix $X_{k,\ell} \in \C^{k\times \ell}$. For reasons that become clear in Section~\ref{sec:convergence} below, a suitable (but possibly not the only) choice for this matrix is obtained by evaluating the compressed  function:
\[
X_{k,\ell} = f\big\{ U_k^* A U_k, V_k^* B V_k \big\}(U_k^* C \overline{V}_\ell) =
f\{ G_k, H_\ell \}(\tilde c \tilde d^T), 
\]
with $\tilde c = U_k^* c$, $\tilde d = V_\ell^* d$.

\begin{algorithm}
\caption{Arnoldi method for approximating $f\{A,B\}(C)$ with $C = cd^T$}
\label{alg:arnoldi}
\begin{algorithmic}[1]
\State Perform $k$ steps of the Arnoldi method to compute an orthonormal basis $U_k$ of $\calK_k(A,c)$ and $G_k = U_k^\ast A U_k$, $\tilde c = U_k^\ast c$.
\State Perform $\ell$ steps of the Arnoldi method to compute an orthonormal basis $V_\ell$ of $\calK_\ell(B,d)$ and $H_\ell = V_\ell^\ast B V_\ell$, $\tilde d = V_k^\ast d$.
\State\label{line:compressed}%
Compute bivariate matrix function
$
  X_{k,\ell} = f\{G_k,H_\ell\}( \tilde c \tilde d^T ).
$
\State Return $U_k X_{k,\ell} V_\ell^T$.
\end{algorithmic}
\end{algorithm}

The described procedure is summarized in Algorithm~\ref{alg:arnoldi}.
We conclude this section with several remarks:
\begin{enumerate}
 \item For the compressed function in Line~\ref{line:compressed}, one requires that $f$ is defined on $\Lambda(G_k) \times \Lambda(H_\ell)$. Considering the numerical ranges
\[
\calW(A) = \big\{ w^* A w:\, w\in \C^m,\,\|w\|_2 = 1\big\}, \quad 
\calW(B) = \big\{ w^* B w:\, w\in \C^m,\,\|w\|_2 = 1\big\},
\]
the following assumption guarantees that this requirement is met; it is also needed in the convergence analysis of Section~\ref{sec:convergence}.
 \begin{assumption} \label{assumption:holnr} Assumption~\ref{assumption:holo} is satisfied with domains $\Omega_A,\Omega_B$ satisfying $\calW(A)\subset \Omega_A$ and $\calW(B)\subset \Omega_B$.
\end{assumption}
Because of $\Lambda(G_k)\subset \calW(G_k) \subset \calW(A)$ and 
$\Lambda(H_k)\subset \calW(H_k) \subset \calW(B)$, Assumption~\ref{assumption:holnr} implies that $f\{G_k,H_\ell\}$ is well defined.

General-purpose approaches to evaluating the small and dense bivariate matrix function $f\{G_k,H_\ell\}( \tilde c \tilde d^T )$ in Line~\ref{line:compressed} are discussed in~\cite[Sec. 6]{Kressner2014bivariate}. However, let us stress that it is generally advisable to use an approach that is tailored to the function $f$ at hand. For example, for $f(x,y) = 1/(x+y)$ this amounts to solving a small linear matrix equation, for which the Bartels-Stewart algorithm~\cite{Bartels1972} should be used. For the finite difference quotient~\eqref{eq:finitedifference}, a suitable method is discussed in Section~\ref{sec:frechet} below.
 \item As in the case of univariate functions, there is no reliable stopping criterion for general $f$  that would allow to choose $k$, $\ell$ such that Algorithm~\ref{alg:arnoldi} is guaranteed to return an approximation with a prescribed accuracy. In the spirit of existing heuristic criteria, we propose to use the approximation
 \[
  \|f\{A,B\}(cd^T) - U_k X_{k,\ell} V_\ell^T\|_F \approx 
  \|U_{k+h} X_{k+h,\ell+h} V_{\ell+h}^T - U_k X_{k,\ell} V_\ell^T\|_F:=e_{k,\ell,h}
 \]
 for some small integer $h$, say $h = 2$. As already explained in, e.g.,~\cite[Sec. 2.3]{BeckermannKressnerSchweitzer2017}, the quantity $e_{k,\ell,h}$ is inexpensive to check because
 \[
  e_{k,\ell,h} = \left\| U_{k+h}\left( X_{k+h,\ell+h} - \begin{bmatrix}X_{k,\ell} & 0 \\ 0 & 0 \end{bmatrix}  \right) V_{\ell+h}^T  \right\|_F = 
  \left\| X_{k+h,\ell+h} - \begin{bmatrix}X_{k,\ell} & 0 \\ 0 & 0 \end{bmatrix} \right\|_F.
 \]
 If $e_{k,\ell,d}$ is smaller than a user-specified tolerance, the output of
 Algorithm~\ref{alg:arnoldi} is accepted. Otherwise, $k$ and $\ell$ are increased, the orthonormal bases $U_k, V_\ell$ are extended and Step 3 is repeated. It may be desirable to increase $k$ and $\ell$ separately. For example, one could increase $k$ if \[\left\| X_{k+h,\ell} - \begin{bmatrix}X_{k,\ell} \\ 0 \end{bmatrix} \right\|_F \ge \left\| X_{k,\ell+h} - \begin{bmatrix}X_{k,\ell} & 0 \end{bmatrix} \right\|_F\]
 and increase $\ell$ otherwise.
 
  Again, we emphasize that better stopping criteria may exist for specific choices of $f$. This is particularly true for linear matrix equations; see~\cite{Palitta2018} and the references therein.
 \item Algorithm~\ref{alg:arnoldi} extends to matrices $C$ of rank $r>1$ by replacing the Arnoldi method in Steps 1 and 2 by a block Arnoldi method, by a global Arnoldi method, or by splitting $C$ into $r$ rank-1 terms; see~\cite{Frommer2017} for a comparison of these approaches in a related setting.
 
\end{enumerate}

\section{Exactness properties and convergence analysis} \label{sec:convergence}

In this section, we analyze the convergence of Algorithm~\ref{alg:arnoldi} following a strategy commonly used for matrix functions; see, in particular,~\cite{BeckermannKressnerSchweitzer2017}. First, we establish that Algorithm~\ref{alg:arnoldi} is exact (that is, it returns $f\{A,B\}(cd^T)$) for polynomials of bounded degree. This then allows us to relate its error for general functions to a bivariate polynomial approximation problem on the numerical ranges. 
\begin{lemma} \label{lemma:exact}
Algorithm~\ref{alg:arnoldi} is exact if $f \in \Pi_{(k-1,\ell-1)}$.
\end{lemma}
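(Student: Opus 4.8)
The plan is to exploit the bilinearity of the construction and reduce the claim to the well-known exactness of the (univariate) Arnoldi approximation for matrix functions. Since both $f \mapsto f\{A,B\}(cd^T)$ and $f \mapsto U_k f\{G_k,H_\ell\}(\tilde c\tilde d^T)V_\ell^T$ are linear in $f$ (by the defining formula~\eqref{eq:polynomialform}), it suffices to verify the identity $f\{A,B\}(cd^T) = U_k X_{k,\ell} V_\ell^T$ for each monomial $f(x,y)=x^iy^j$ with $0\le i\le k-1$ and $0\le j\le\ell-1$. For such a monomial the bivariate matrix function factorizes, $f\{A,B\}(cd^T)=(A^ic)(B^jd)^T$, and likewise $X_{k,\ell}=f\{G_k,H_\ell\}(\tilde c\tilde d^T)=(G_k^i\tilde c)(H_\ell^j\tilde d)^T$. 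Hence the identity decouples into two independent one-sided claims,
\[
U_k G_k^i\tilde c = A^i c\quad(0\le i\le k-1),\qquad V_\ell H_\ell^j\tilde d = B^j d\quad(0\le j\le\ell-1),
\]
after which the two rank-one products match.

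I would prove the first of these (the second being identical with $B,V_\ell,H_\ell,d$ in place of $A,U_k,G_k,c$) by induction on $i$, and this is where the structure of the Arnoldi decomposition enters. The base case $i=0$ uses that $c$ lies in the column space of $U_k$, so $U_k\tilde c=U_kU_k^*c=c$. For the step I would invoke the Arnoldi relation~\eqref{eq:arnoldia}:
\[
A^{i+1}c = A\,U_kG_k^i\tilde c = \big(U_kG_k+g_{k+1,k}u_{k+1}e_k^T\big)G_k^i\tilde c = U_kG_k^{i+1}\tilde c + g_{k+1,k}\,(e_k^TG_k^i\tilde c)\,u_{k+1}.
\]

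The crux — and the step I expect to require the most care — is showing that the residual coefficient $e_k^TG_k^i\tilde c$ vanishes for all $i\le k-2$, so that the unwanted term drops out and the induction closes. This rests on two facts specific to the Arnoldi construction: first, because the first basis vector is $u_1=c/\|c\|_2$, we have $\tilde c=U_k^*c=\|c\|_2\,e_1$; second, $G_k=U_k^*AU_k$ is upper Hessenberg, and an upper Hessenberg matrix advances the ``last nonzero entry'' by at most one under multiplication, so $G_k^ie_1\in\operatorname{span}\{e_1,\ldots,e_{i+1}\}$, a support statement I would verify by a short separate induction. For $i\le k-2$ the index $i+1$ is at most $k-1$, hence $G_k^i\tilde c$ has a zero $k$th component and $e_k^TG_k^i\tilde c=0$. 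With the residual gone, the induction yields $U_kG_k^i\tilde c=A^ic$ up to $i=k-1$ (the final step $i=k-1$ only requires the hypothesis and residual vanishing at $i=k-2$, so no obstruction arises), and the analogous argument for $B$ completes the proof.
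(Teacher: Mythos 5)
Your proof is correct and follows essentially the same route as the paper: reduce to monomials by linearity and invoke the one-sided Arnoldi exactness relations $A^i c = U_k G_k^i U_k^\ast c$ and $B^j d = V_\ell H_\ell^j V_\ell^\ast d$. The only difference is that the paper cites these relations as a well-known consequence of the Arnoldi decompositions~\eqref{eq:arnoldia}--\eqref{eq:arnoldib}, whereas you supply the (correct) inductive argument, including the Hessenberg support observation that makes the residual term vanish.
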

\begin{proof}
The following well-known exactness property of the Arnoldi method (see, e.g.,~\cite{Saad1992}) follows by induction from~\eqref{eq:arnoldia}--\eqref{eq:arnoldib}: \[
 A^i c = U_k \big( G_k \big)^i U_k^\ast c,\quad i = 0,\ldots,k-1,  \qquad 
 B^j d = V_\ell \big( H_\ell \big)^j V_\ell^\ast d,\quad  j = 0,\ldots,\ell-1.
\]
By writing $f(x,y) = \sum_{i = 0}^{k-1} \sum_{j = 0}^{\ell-1} p_{ij} x^i y^j$ and using~\eqref{eq:polynomialform}, this gives
\begin{eqnarray*}
 f\{A,B\}(cd^T) &= & \sum_{i = 0}^{k-1} \sum_{j = 0}^{\ell-1} A^i c (B^j d)^T = U_k\Big( \sum_{i = 0}^{k-1} \sum_{j = 0}^{\ell-1} G_k^i U_k^\ast c d^T\,\overline V_\ell (H_\ell^T)^j \Big) V_\ell^T \\&=& U_k\cdot f\{G_k,H_\ell\}\big( U_k^\ast c d^T\, \overline V_\ell \big) \cdot  V_\ell^T,
\end{eqnarray*}
which corresponds to what is returned by Algorithm~\ref{alg:arnoldi}.
\end{proof}

To treat general functions, we will need to estimate the norm of $f\{A,B\}$ induced by the Frobenius norm on $\C^{m\times n}$:
\[
 \|f\{A,B\}\|:=\max\big\{ \|f\{A,B\}(C)\|_F:\,C\in \C^{m\times n},\,\|C\|_F = 1 \big\}.
 \]
For a (univariate) matrix function $f(A)$, the seminal result by Crouzeix and Palencia~\cite{CrouzeixPalencia2017} states that
$\|f(A)\|_2\le (1+\sqrt{2})\max_{x\in \calW(A)} |f(x)|$. Theorem 1.1 in~\cite{Gil2011} appears to be the only result in the literature that aims at establishing norm bounds for general bivariate functions. This result provides an upper bound in terms of Henrici's departure from normality for $A$ and $B$~\cite{Henrici1962} as well as the maximal absolute value of $f$ and its derivatives on convex hulls of $\Lambda(A),\Lambda(B)$. The following lemma provides an upper bound in terms of the maximal absolute value of $f$ on the numerical ranges, which is better suited for our purposes.
\begin{lemma}  \label{lemma:normestimate} Suppose that Assumption~\ref{assumption:holnr} holds and let 
$\mathbb E_A, \mathbb E_B$ be compact connected sets such that
$\calW(A)\subset \mathbb E_A \subset \Omega_A$ and
$\calW(B)\subset \mathbb E_B \subset \Omega_B$.  Let $\mathrm{len}(\partial \mathbb E_A)$ denote the length of the boundary curve $\partial \mathbb E_A$ of $\mathbb E_A$, let $d_A(\cdot)$ denote the distance between a subset of $\mathbb C$ and $\calW(A)$, and define analogous quantities for $B$.
Then
\[\|f\{A,B\}\| \le M \cdot \max_{x \in \mathbb E_A, y\in \mathbb E_B} |f(x,y)|,\] where 
\begin{enumerate}
 \item[(a)] $M = 1$ if both $A$ and $B$ are normal;
 \item[(b)] $M = 1+\sqrt{2}$ if $A$ or $B$ are normal;
 \item[(c)] $M = \frac{1+\sqrt{2}}{2\pi} \min\big\{\frac{\mathrm{len}(\partial \mathbb E_A)}{d_A(\partial \mathbb E_A)},\frac{\mathrm{len}(\partial \mathbb E_B)}{d_B(\partial \mathbb E_B)}\big\}$ otherwise, under the additional assumption that $d_A(\partial \mathbb E_A)>0$ or $d_B(\partial \mathbb E_B)>0$.
\end{enumerate}
\end{lemma}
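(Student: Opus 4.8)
The plan is to treat the three cases separately, in each case combining the contour representation~\eqref{eq:contour} with the Crouzeix--Palencia bound $\|g(A)\|_2\le(1+\sqrt2)\max_{\calW(A)}|g|$ and the diagonalization results established above. A useful preliminary reduction is the transpose symmetry $f\{A,B\}(C)^T=\tilde f\{B,A\}(C^T)$ with $\tilde f(x,y):=f(y,x)$, which one verifies on monomials from~\eqref{eq:polynomialform} and extends to analytic $f$ via~\eqref{eq:contour}. Since $\|\cdot\|_F$ is invariant under transposition and $\calW(A),\calW(B)$ merely exchange roles, this symmetry lets me assume in cases (a) and (b) that $B$ is the normal matrix, and it is what ultimately produces the minimum in case (c).

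For case (a) I diagonalize both matrices unitarily and invoke the elementwise-product formula stated after Lemma~\ref{lemma:diagB}: with $P,Q$ unitary one has $f\{A,B\}(C)=P\big(F\circ(P^*CQ^{-T})\big)Q^T$, where $F_{ij}=f(\lambda_i,\mu_j)$. Because $P$, $Q^T$ and $Q^{-T}=\overline Q$ are unitary and $\|\cdot\|_F$ is unitarily invariant, taking Frobenius norms and using $\|F\circ X\|_F\le(\max_{ij}|F_{ij}|)\,\|X\|_F$ gives $\|f\{A,B\}(C)\|_F\le\max_{i,j}|f(\lambda_i,\mu_j)|\,\|C\|_F$; since $\Lambda(A)\subset\calW(A)\subset\mathbb E_A$ and likewise for $B$, the bound with $M=1$ follows. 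For case (b) with $B$ normal, Lemma~\ref{lemma:diagB} (with unitary $Q$) yields $f\{A,B\}(C)=[\,f_{\mu_1}(A)\tilde c_1,\dots,f_{\mu_n}(A)\tilde c_n\,]\,Q^T$, so that $\|f\{A,B\}(C)\|_F^2=\sum_j\|f_{\mu_j}(A)\tilde c_j\|_2^2$. Applying Crouzeix--Palencia to each univariate $f_{\mu_j}(x)=f(x,\mu_j)$ bounds $\|f_{\mu_j}(A)\|_2\le(1+\sqrt2)\max_{x\in\mathbb E_A}|f(x,\mu_j)|\le(1+\sqrt2)\max_{\mathbb E_A\times\mathbb E_B}|f|$, and summing over $j$ together with $\sum_j\|\tilde c_j\|_2^2=\|\tilde C\|_F^2=\|C\|_F^2$ delivers $M=1+\sqrt2$.

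Case (c) is the core of the argument. Writing $-1/(4\pi^2)=1/(2\pi\mathrm i)^2$ and choosing $\Gamma_A=\partial\mathbb E_A$, $\Gamma_B=\partial\mathbb E_B$ in~\eqref{eq:contour}, I factor the double integral as an outer $A$-integral of the inner $B$-integral,
\[
 f\{A,B\}(C)=\frac{1}{2\pi\mathrm i}\oint_{\Gamma_A}(xI-A)^{-1}\,C\,f_x(B^T)\,\mathrm dx,
\]
where $f_x(y):=f(x,y)$ and I have used that the inner integral equals the univariate matrix function $f_x(B^T)=f_x(B)^T$. For fixed $x\in\Gamma_A$, Crouzeix--Palencia gives $\|C f_x(B^T)\|_F\le\|C\|_F\|f_x(B)\|_2\le(1+\sqrt2)\|C\|_F\max_{y\in\mathbb E_B}|f(x,y)|$. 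For the outer integral I bound the resolvent through the distance to the numerical range, $\|(xI-A)^{-1}\|_2\le 1/d_A(x)\le 1/d_A(\partial\mathbb E_A)$, which is the standard estimate $\mathrm{dist}(x,\calW(A))\le\sigma_{\min}(xI-A)$. Estimating the integral by arclength then yields $\|f\{A,B\}(C)\|_F\le\frac{1+\sqrt2}{2\pi}\,\frac{\mathrm{len}(\partial\mathbb E_A)}{d_A(\partial\mathbb E_A)}\,\|C\|_F\max_{\mathbb E_A\times\mathbb E_B}|f|$, and interchanging $A$ and $B$ via the transpose symmetry gives the analogous bound with $A$ replaced by $B$; taking the smaller of the two produces the stated minimum.

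The main obstacle is case (c), and specifically getting its two estimates to fit together: recognizing the inner contour integral as a genuine univariate matrix function so that Crouzeix--Palencia applies to $f_x(B)$ uniformly in $x\in\Gamma_A$ (which needs analyticity of $f_x$ on a neighbourhood of $\calW(B)$, guaranteed by Assumption~\ref{assumption:holnr}), and the resolvent--distance bound, which requires $d_A(\partial\mathbb E_A)>0$ for finiteness. The hypothesis that $d_A(\partial\mathbb E_A)>0$ or $d_B(\partial\mathbb E_B)>0$ is precisely what makes at least one of the two symmetric bounds finite, so that the minimum is well defined.
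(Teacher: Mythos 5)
Your proof is correct and follows essentially the same route as the paper: unitary diagonalization plus Crouzeix--Palencia for (a) and (b), and for (c) the contour representation with one inner integral collapsed to a univariate matrix function, Crouzeix--Palencia applied to that factor, and the resolvent bounded by the distance to the numerical range along the remaining contour. The only (cosmetic) differences are that you use the Hadamard-product formula in (a) rather than the column formula of Lemma~\ref{lemma:diagB}, and in (c) you integrate out the $B$-variable first and recover the second bound via the transpose symmetry, whereas the paper integrates out the $A$-variable first and invokes the symmetric argument directly.
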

\begin{proof} \emph{(a) and (b).} Assume that $B$ is normal. The result of Lemma~\ref{lemma:diagB}, with $Q$ chosen unitary, implies
\begin{eqnarray*}
 \|f\{A,B\}(C)\|^2_F &=& \sum_{j = 1}^m \| f_{\mu_j}(A) \tilde c_j\|_2^2 \le \sum_{j = 1}^m \| f_{\mu_j}(A) \|^2_2 \|\tilde c_j\|_2^2 \\
 &=& M^2 \sum_{j = 1}^m \max_{x\in \mathbb E_A} |f_{\mu_j}(x)|^2 \cdot \|\tilde c_j\|_2^2 \le M^2 \max_{x\in \mathbb E_A,y\in \mathbb E_B} |f(x,y)|^2 \cdot \|C\|_F^2,
\end{eqnarray*}
with $M = 1$ if $A$ is also normal and $M = 1+\sqrt{2}$ otherwise~\cite{CrouzeixPalencia2017}. The proof is analogous when $B$ is normal and $A$ is not.

\emph{(c).} Starting from the representation~\eqref{eq:contour}, we obtain
\begin{eqnarray*}
  f\{A,B\}(C) &=& -\frac{1}{4\pi^2} \oint_{\partial \mathbb E_B} \Big[ \oint_{\partial \mathbb E_A} f(x,y) (xI-A)^{-1} \text{d}x \Big] C (yI-B^T)^{-1}\,\text{d}y \\
  &=& \frac{1}{2\pi \mathrm{i}} \oint_{\partial \mathbb E_B} f_y(A) C (yI-B^T)^{-1}\,\text{d}y
\end{eqnarray*}
and, in turn,
\[
 \|f\{A,B\}(C)\|_F \le \frac{1}{2\pi} \max_{y \in \mathbb E_B} \|f_y(A)\|_2 \|C\|_F \cdot \oint_{\partial \mathbb E_B} \big\| (yI-B^T)^{-1} \big\|_2\,\text{d}y
\]
Combined with $\| (yI-B^T)^{-1} \|_2 \le 1/d_B(y)$,  this shows
$
 \|f\{A,B\}\| \le  \frac{1+\sqrt{2}}{2\pi} \frac{\mathrm{len}(\partial \mathbb E_B)}{d_B(\partial \mathbb E_B)}.
$
Analogously, one establishes the same inequality with $B$ replaced by $A$.
\end{proof}

It remains an open and interesting problem to study whether the constant in Lemma~\ref{lemma:normestimate} (c) can be replaced by, say, $M = (1+\sqrt{2})^2$.
\begin{theorem} \label{theorem:convergence}
Let $\mathbb E_A$, $\mathbb E_B$, and $M$ be defined as in Lemma~\ref{lemma:normestimate} and suppose that the assumptions of the lemma hold. Then the output of Algorithm~\ref{alg:arnoldi} satisfies the error bound
\[
 \|f\{A,B\}(cd^T) - U_k X_{k,\ell} V_\ell^T \|_F \le 2M \|c\|_2 \|d\|_2 \cdot \inf_{p \in \Pi_{k-1,\ell-1}} \max_{x \in \mathbb E_A, y\in \mathbb E_B} |f(x,y)-p(x,y)|.
\]
\end{theorem}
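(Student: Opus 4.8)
The plan is to combine the exactness property from Lemma~\ref{lemma:exact} with the operator-norm bound from Lemma~\ref{lemma:normestimate}, using the standard ``add and subtract a polynomial'' trick familiar from the univariate theory. First I would fix an arbitrary polynomial $p \in \Pi_{k-1,\ell-1}$ and split the error into two contributions by inserting $\pm\, p\{A,B\}(cd^T)$ and the corresponding compressed quantity. The key observation is that, by Lemma~\ref{lemma:exact}, Algorithm~\ref{alg:arnoldi} reproduces $p\{A,B\}(cd^T)$ exactly, i.e.\ $p\{A,B\}(cd^T) = U_k\, p\{G_k,H_\ell\}(\tilde c\tilde d^T)\, V_\ell^T$. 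This lets me rewrite the full error as
\[
 f\{A,B\}(cd^T) - U_k X_{k,\ell} V_\ell^T
 = (f-p)\{A,B\}(cd^T) \;-\; U_k\, (f-p)\{G_k,H_\ell\}(\tilde c\tilde d^T)\, V_\ell^T,
\]
where I use linearity of the bivariate matrix function in $f$ together with $X_{k,\ell} = f\{G_k,H_\ell\}(\tilde c\tilde d^T)$.

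Next I would bound each of the two terms separately by the triangle inequality. For the first term, $\|(f-p)\{A,B\}(cd^T)\|_F \le \|(f-p)\{A,B\}\|\cdot\|cd^T\|_F$, and since $\|cd^T\|_F = \|c\|_2\|d\|_2$, applying Lemma~\ref{lemma:normestimate} to the function $f-p$ (which also satisfies Assumption~\ref{assumption:holnr}, being a difference of an admissible function and a polynomial) gives $\|(f-p)\{A,B\}\| \le M \max_{x\in\mathbb E_A,\,y\in\mathbb E_B}|f(x,y)-p(x,y)|$. For the second term, because $U_k$ and $V_\ell$ have orthonormal columns, $\|U_k\, (f-p)\{G_k,H_\ell\}(\tilde c\tilde d^T)\, V_\ell^T\|_F = \|(f-p)\{G_k,H_\ell\}(\tilde c\tilde d^T)\|_F$, which is bounded by $\|(f-p)\{G_k,H_\ell\}\|\cdot\|\tilde c\|_2\|\tilde d\|_2$. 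Here I use that $\tilde c = U_k^* c$ and $\tilde d = V_\ell^* d$ satisfy $\|\tilde c\|_2 \le \|c\|_2$ and $\|\tilde d\|_2 \le \|d\|_2$ (orthogonal projections are contractive). The crucial point is that the \emph{same} sets $\mathbb E_A,\mathbb E_B$ and constant $M$ work for the compressed problem: by the inclusions $\calW(G_k)\subset\calW(A)\subset\mathbb E_A$ and $\calW(H_\ell)\subset\calW(B)\subset\mathbb E_B$ noted after Assumption~\ref{assumption:holnr}, Lemma~\ref{lemma:normestimate} applies to $G_k,H_\ell$ with the identical $\mathbb E_A,\mathbb E_B$, yielding $\|(f-p)\{G_k,H_\ell\}\| \le M \max_{x\in\mathbb E_A,\,y\in\mathbb E_B}|f-p|$.

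Adding the two bounds produces the factor $2M\|c\|_2\|d\|_2$ times $\max_{\mathbb E_A\times\mathbb E_B}|f-p|$, and since $p\in\Pi_{k-1,\ell-1}$ was arbitrary I would finally take the infimum over all such $p$ to conclude. The step I expect to require the most care is verifying that Lemma~\ref{lemma:normestimate} genuinely transfers to the compressed matrices $G_k,H_\ell$ with the \emph{same} enclosing sets and constant $M$: one must confirm that the numerical-range inclusions hold (so that $\mathbb E_A,\mathbb E_B$ remain admissible containing sets for $G_k,H_\ell$) and, in case~(c), that the geometric quantities $\mathrm{len}(\partial\mathbb E_A)$ and $d_A(\partial\mathbb E_A)$ used in $M$ are defined relative to $\calW(A)$ rather than $\calW(G_k)$ — but since $\calW(G_k)\subseteq\calW(A)$ the distances only increase, so the bound with the original $M$ remains valid. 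A further subtlety worth a remark is whether the normality case labels in $M$ survive compression: if $A$ is normal then $G_k=U_k^*AU_k$ need not be normal, so strictly one should note that the bound $M$ is an \emph{upper} bound that holds uniformly, and it is this uniform $M$ that is used for both terms.
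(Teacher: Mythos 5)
Your proposal is correct and follows essentially the same route as the paper: insert an arbitrary $p\in\Pi_{k-1,\ell-1}$, use the exactness of Lemma~\ref{lemma:exact} to rewrite the error as $(f-p)\{A,B\}(cd^T) - U_k\,(f-p)\{G_k,H_\ell\}(\tilde c\tilde d^T)\,V_\ell^T$, bound both terms via Lemma~\ref{lemma:normestimate} and the inclusions $\calW(G_k)\subset\calW(A)$, $\calW(H_\ell)\subset\calW(B)$, and take the infimum over $p$. Your closing remark about normality not being inherited by $G_k$ is a genuine subtlety that the paper's own proof also glosses over, and your handling of the norm details ($\|\tilde c\|_2\le\|c\|_2$, orthonormality of $U_k,V_\ell$) is if anything more explicit than the original.
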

\begin{proof}
 Let $p \in \Pi_{k-1,\ell-1}$. By Lemma~\ref{lemma:exact}, we have
 \[
  p\{A,B\}(cd^T) = U_k\cdot p\{G_k,H_\ell\}\big( \tilde c \tilde d^T \big)\cdot V_\ell^T, \quad \tilde c = U_k^\ast c, \quad \tilde d = V_\ell^\ast c.
 \]
 Hence,
 \begin{eqnarray}
  && f\{A,B\}(cd^T) - U_k X_{k,\ell} V_\ell^T \nonumber \\
  &=& f\{A,B\}(cd^T) - p\{A,B\}(cd^T) - U_k \Big( f\{G_k,H_\ell\}\big( \tilde c \tilde d^T \big) - p\{G_k,H_\ell\}\big( \tilde c \tilde d^T \big) \Big) V_\ell^T \nonumber \\
  &=& e\{A,B\}(cd^T) - U_k\cdot e\{G_k,H_\ell\}\big( \tilde c \tilde d^T \big)\cdot V_\ell^T \label{eq:expression}
 \end{eqnarray}
 with $e = f-p$. Applying Lemma~\ref{lemma:normestimate} and using that the numerical ranges of $G_k$ and $H_k$ are contained in $A$ and $B$, respectively, we have
 \[
  \max\{ \|e\{A,B\}\|_F, \|e\{G_k,H_\ell\}\|_F \} \le M\cdot \max_{x \in \mathbb E_A, y\in \mathbb E_B} |e(x,y)|
 \]
 Inserted into~\eqref{eq:expression}, this gives
 \[
  \|f\{A,B\}(cd^T) - U_k X_{k,\ell} V_\ell^T\|_F \le 2M \|c\|_2 \|d\|_2 \cdot \max_{x \in \mathbb E_A, y\in \mathbb E_B} |e(x,y)|,
 \]
 Because $p$ was chosen arbitrary, the result of the theorem follows.
\end{proof}

Combining Lemma~\ref{lemma:normestimate} with existing results on polynomial multivariate approximation yields concrete convergence estimates.  For example, let us consider the case of Hermitian matrices $A$ and $B$. By a suitable reparametrization, we may assume without loss of generality that $\calW(A)=\calW(B)=[-1,1]$.  By Assumption~\ref{assumption:holnr}, there is $\rho>1$ such that $f$ is analytic on $E_\rho \times E_\rho$, with the Bernstein ellipse $E_\rho = \{ z\in \C:\, |z-1|+|z+1|\le \rho+\rho^{-1}\}$. Then for any $\tilde \rho \in (1,\rho)$ it holds that
\begin{equation} \label{eq:optproblem}
 \inf_{p \in \Pi_{k-1,k-1}} \max_{x,y \in [-1,1]} |f(x,y)-p(x,y)| = \mathcal O(\tilde \rho^{-k}),\quad k\to \infty,
\end{equation}
see, e.g.,~\cite{Trefethen2017}. Hence, Algorithm~\ref{alg:arnoldi} converges linearly as $\ell=k \to\infty$ with a rate arbitrarily close to $\rho$.

For $f(x,y) = 1/(\alpha+x+y)$, a specification of~\eqref{eq:optproblem} can be found in~\cite[Lemma A.1]{Kressner2010}, resulting in a convergence bound for Sylvester equation that matches the asymptotics of~\cite{Simoncini2009a}. This is also an example for a function of the form $f(x,y) = g(x+y)$ with a univariate function $g$. By choosing an approximating polynomial of the same form, the convergence estimate of Theorem~\ref{theorem:convergence} simplifies for any such function to
\begin{equation} \label{eq:simplifiedresult}
 \|f\{A,B\}(cd^T) - U_k X_{k,\ell} V_\ell^T \|_F \le 2M \|c\|_2 \|d\|_2 \cdot \min_{p \in \Pi_{k-1}} \max_{z \in \mathbb E_A \oplus \mathbb E_B} |g(z)-p(z)|,
\end{equation}
where $\oplus$ denotes the Minkowski sum of two sets and $\Pi_{k-1}$ is the set of all (univariate) polynomials of degree at most $k-1$.

We now use~\eqref{eq:simplifiedresult} to analyze the Krylov subspace method for the time-limited Gramian~\eqref{eq:timelimitedeqn} for a symmetric negative definite matrix $A$ with eigenvalues contained in the interval $[-\beta,-\alpha]$, $0<\alpha < \beta <\infty$. By combining~\eqref{eq:timelimited} and~\eqref{eq:simplifiedresult}, convergence estimates can be obtained by studying the polynomial approximation of $g(z) = z^{-1} (\exp(t_ez) - \exp(t_s z))$ on the interval $[-\beta,-\alpha]$. For $t_e = \infty$, $g$ always has a singularity at $z = 0$. In turn, the asymptotic linear convergence rate $\rho$ predicted by polynomial approximation is independent of $t_s\ge 0$. In other words, for $t_e = \infty$ the convergence behavior for time-limited Gramians ($t_s>0$) and Lyapunov equations ($t_s = 0$) are expected to be similar. for $t_e < \infty$, the situation is dramatically different: $g$ is an entire function, yielding superlinear convergence. For $t_s = 0$, $g(z) = z^{-1} (\exp(t_ez) - 1) = t_e \varphi(t_ez)$ and Lemma~\ref{lemma:approxphi} in the appendix can be applied to obtain quantitative convergence estimates.
\begin{example} \label{example:timelimited}
To illustrate the convergence of Algorithm~\ref{alg:arnoldi} for approximating time-limited Gramians, we consider a $500\times 500$ diagonal matrix $A$ with eigenvalues uniformly distributed in $[-100,-0.1]$ and a random vector $c$ of norm 1. Figure~\ref{fig:convergencetimelimited} reports the error $\|X-\tilde X_k\|_2$ (vs. $k$) of the approximation $\tilde X_k$ returned by Algorithm~\ref{alg:arnoldi} with $\ell = k$. The left plot displays the effect of varying $t_s$ while keeping $t_e = \infty$ fixed. 
\begin{figure}
\begin{minipage}{.49\textwidth}
\includegraphics[width=\textwidth]{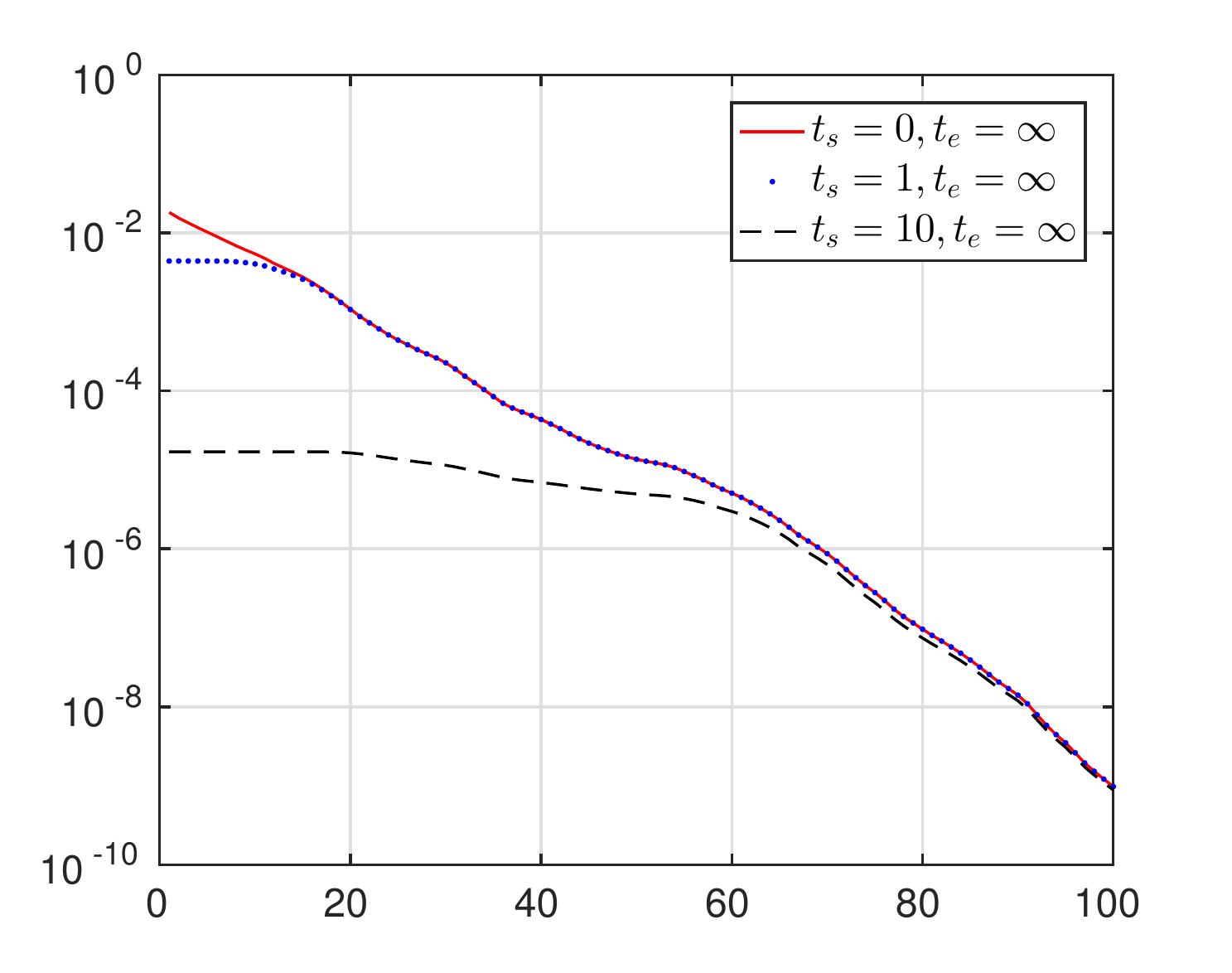}
\end{minipage}
\begin{minipage}{.49\textwidth}
\includegraphics[width=\textwidth]{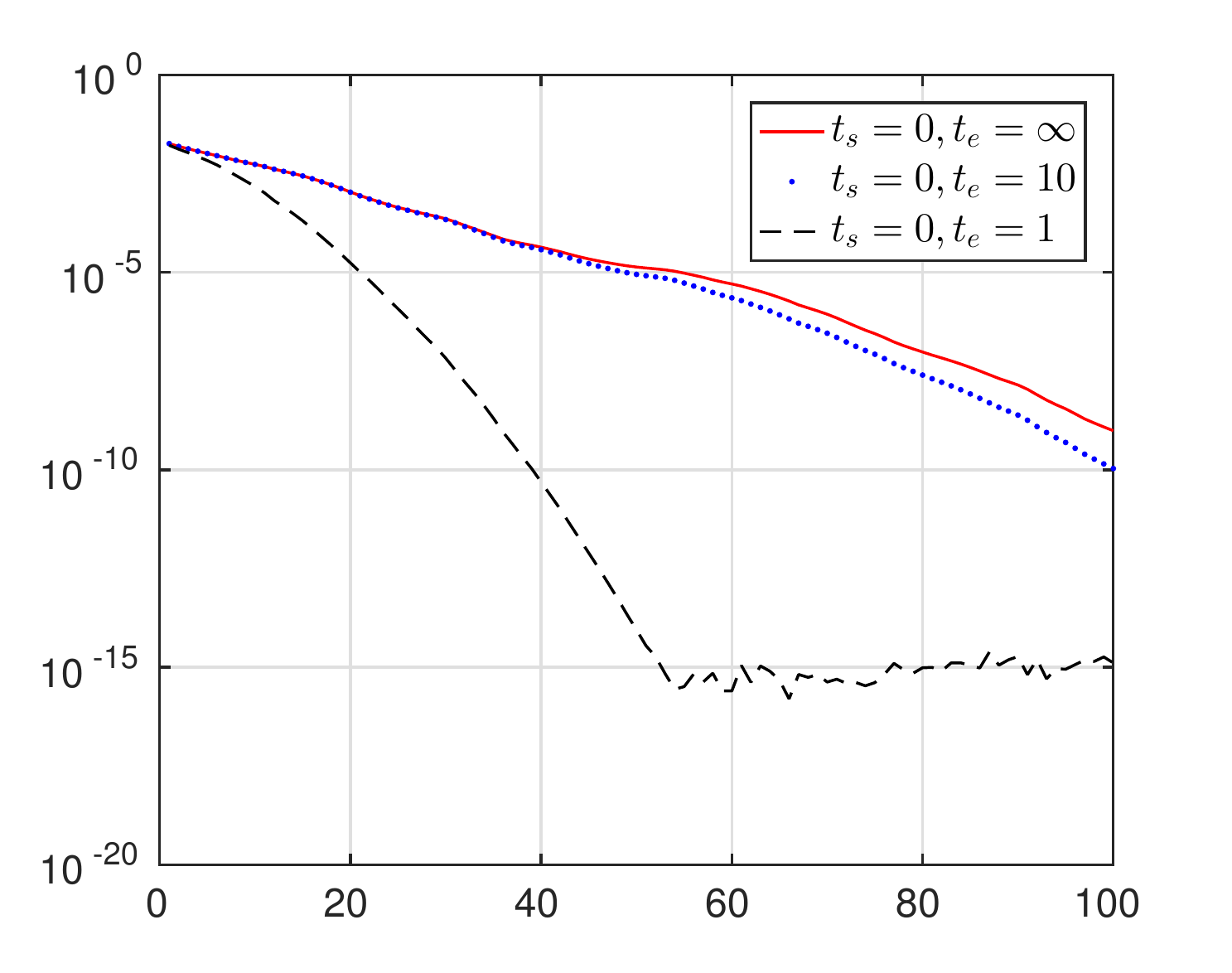}
\end{minipage}
\caption{\label{fig:convergencetimelimited} Convergence of Algorithm~\ref{alg:arnoldi} applied to the time-limited Gramians from Example~\ref{example:timelimited} for different choices of $t_s,t_e$.}
\end{figure}
While there is a pronounced difference initially, probably due to the different norms of $X$, the convergence eventually settles at the same curve.
The right plot displays the effect of choosing $t_e$ finite, clearly exhibiting superlinear convergence for $t_e = 1$.
\end{example}

\section{Application to Fr\'echet derivatives} \label{sec:frechet}

Given a univariate function $f$ analytic in a neighborhood of the eigenvalues of $A$, the Fr\'echet derivative of $f$ at $A$ is a linear map $Df\{A\}\!: \mathbb C^{n\times n} \to \mathbb C^{n\times n}$ uniquely defined by the property $f(A+E) = f(A) + Df\{A\}(E) + \mathcal O(\|E\|_2^2)$. In~\cite[Thm 5.1]{Kressner2014bivariate} it was shown that $Df\{A\} = f^{[1]}\{A,A^T\}$ for the function $f^{[1]}$ defined in~\eqref{eq:finitedifference}. In turn, this enables us to use Algorithm~\ref{alg:arnoldi} for approximating the application of $Df\{A\}$ to rank-one or, more generally, to low-rank matrices. This may be, for example, of interest when approximating gradients in the solution of optimization problems that involve matrix functions; see~\cite{Thanou2017} for an example.

When applying Algorithm~\ref{alg:arnoldi} to $f^{[1]}\{A,A^T\}$ with $\ell = k$, the reduced problem
$f^{[1]}\{G_k,H_k \}$ does, in general, not satisfy $H_k = G_k^T$ and can therefore not be related to a Fr\'echet derivative of $f$ (unless $A$ is Hermitian and $d$ is a scalar multiple of $\overline c$). The following lemma shows that a well-known formula for the Fr\'echet derivative (see, e.g.,~\cite[Thm. 2.1]{Mathias1996}) carries over to this situation.
\begin{lemma} \label{lemma:frechet} Let $f$ be analytic on a domain $\Omega$ containing the eigenvalues of $A \in \C^{m\times m}$ and $B \in \C^{n\times n}$. Then
\[
 f\left( \begin{bmatrix}
A & C \\
0 & B
         \end{bmatrix}
\right) = \begin{bmatrix}
f(A) & f^{[1]}\{A,B^T\}(C) \\
0 & f(B)
         \end{bmatrix}.
\]
\end{lemma}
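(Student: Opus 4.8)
The plan is to prove the block-triangular formula
\[
 f\!\left( \begin{bmatrix} A & C \\ 0 & B \end{bmatrix}\right) = \begin{bmatrix} f(A) & f^{[1]}\{A,B^T\}(C) \\ 0 & f(B) \end{bmatrix}
\]
by first establishing it for polynomials and then extending to general analytic $f$ via the contour-integral representation. Write $T = \left[\begin{smallmatrix} A & C \\ 0 & B \end{smallmatrix}\right]$. Since $T$ is block upper triangular, so is every power $T^j$, with diagonal blocks $A^j$ and $B^j$; this immediately handles the $(1,1)$ and $(2,2)$ blocks. The content is in the $(1,2)$ block. A direct induction on $j$ shows that the $(1,2)$ block of $T^j$ equals $\sum_{i=0}^{j-1} A^i C B^{j-1-i}$. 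For a monomial $f(z)=z^j$ one checks that this sum coincides with $f^{[1]}\{A,B^T\}(C)$: indeed, using the polynomial definition~\eqref{eq:polynomialform}, the divided-difference polynomial $f^{[1]}(x,y) = (x^j-y^j)/(x-y) = \sum_{i=0}^{j-1} x^i y^{j-1-i}$ gives $f^{[1]}\{A,B^T\}(C) = \sum_{i=0}^{j-1} A^i C (B^T)^{j-1-i\,T} = \sum_{i=0}^{j-1} A^i C B^{j-1-i}$, matching the block. By linearity in $f$, the formula holds for all polynomials.

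To pass from polynomials to analytic $f$, I would invoke the contour-integral (Cauchy) definition of the matrix function. Choose a contour $\Gamma$ enclosing the eigenvalues of $A$ and of $B$, hence the eigenvalues of $T$. Then
\[
 f(T) = \frac{1}{2\pi\mathrm i} \oint_\Gamma f(z)\,(zI-T)^{-1}\,\mathrm dz.
\]
Using the block structure of the resolvent — $(zI-T)^{-1}$ is block upper triangular with diagonal blocks $(zI-A)^{-1}$, $(zI-B)^{-1}$ and off-diagonal block $(zI-A)^{-1} C (zI-B)^{-1}$ (obtained by the standard $2\times 2$ block inverse formula) — the $(1,2)$ entry of $f(T)$ becomes
\[
 \frac{1}{2\pi\mathrm i} \oint_\Gamma f(z)\,(zI-A)^{-1} C (zI-B)^{-1}\,\mathrm dz.
\]
It remains to identify this single contour integral with $f^{[1]}\{A,B^T\}(C)$, whose definition~\eqref{eq:contour} is a \emph{double} contour integral of $f^{[1]}(x,y)=(f(x)-f(y))/(x-y)$ against $(xI-A)^{-1} C (yI-B^T)^{-1}$. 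This reduction from a double to a single integral is the main technical step.

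The key identity is the resolvent relation $(xI-A)^{-1}(yI-B)^{-1}$ coupled with the partial-fraction structure of the divided difference; applying the residue theorem in one variable collapses the double integral. Concretely, in the double integral one substitutes $f^{[1]}(x,y) = (f(x)-f(y))/(x-y)$ and, for fixed $x$, evaluates the $y$-contour integral: the factor $(x-y)^{-1}$ contributes a residue at $y=x$ (which must lie inside the relevant contour, arranged by taking nested contours $\Gamma_B$ enclosing $\Gamma_A$, or vice versa), and the pole structure of $(yI-B^T)^{-1}$ is handled by Cauchy's theorem for $f(A)$ and $f(B)$. Carefully bookkeeping which residues are picked up — and verifying the standard $2\times 2$ block resolvent formula, which hinges on the algebraic identity $(xI-A)^{-1} C (xI-B)^{-1} = $ the off-diagonal resolvent block — reduces the double integral to the single integral above. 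I expect this residue-collapse argument, together with the correct choice of nested contours so that the poles land on the intended side, to be the main obstacle; the polynomial case and the block-triangularity of $T^j$ are routine.

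Alternatively, and perhaps more cleanly, one can avoid the double integral entirely by arguing the analytic case directly from the polynomial case by a density/continuity argument: both sides of the claimed identity are analytic (indeed holomorphic) functionals of $f$ on the space of functions analytic near $\Lambda(A)\cup\Lambda(B)$, they agree on polynomials, and the matrix function $f\mapsto f(T)$ as well as $f\mapsto f^{[1]}\{A,B^T\}(C)$ depend continuously (via the Cauchy integral) on $f$ in the topology of uniform convergence on a neighborhood of the spectrum. Since polynomials are dense in this sense on a suitable compact neighborhood containing $\Lambda(T)$, the identity extends to all analytic $f$. I would present the polynomial computation in full and then select whichever extension argument — residue collapse or density — is shorter to write rigorously, likely the density argument.
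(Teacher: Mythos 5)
Your polynomial computation is correct and complete: the induction giving $(T^j)_{12}=\sum_{i=0}^{j-1}A^iCB^{j-1-i}$ and its identification with $f^{[1]}\{A,B^T\}(C)$ via~\eqref{eq:polynomialform} are fine. The paper, however, does not pass through polynomials at all. It proves the identity directly for analytic $f$ by writing the divided difference itself as a \emph{single} contour integral,
\[
 f^{[1]}(x,y)=\frac{1}{2\pi\mathrm{i}}\oint_\Gamma \frac{f(z)}{(z-x)(z-y)}\,\mathrm{d}z ,
\]
with $\Gamma$ enclosing both $\Gamma_A$ and $\Gamma_B$, substituting this into the double integral~\eqref{eq:contour}, swapping the order of integration, and collapsing the inner $x$- and $y$-integrals by the Cauchy formula for the resolvent; the result is exactly the single integral $\frac{1}{2\pi\mathrm{i}}\oint_\Gamma f(z)(zI-A)^{-1}C(zI-B)^{-1}\mathrm{d}z$, which is then matched to the $(1,2)$ block of $f(T)$ via the block resolvent formula, just as you propose. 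This representation of $f^{[1]}$ is precisely the device you are missing: it turns your ``main obstacle'' into two applications of the Cauchy integral formula, with no nested contours and no residue bookkeeping, because $1/((z-x)(z-y))$ is jointly analytic in $(x,y)$ inside $\Gamma$.

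Both of your proposed extension routes are viable in principle but each has a thin spot as written. In the residue-collapse route, the nesting you suggest does not straightforwardly place the pole at $y=x$ on the intended side: splitting $f^{[1]}=\frac{f(x)}{x-y}-\frac{f(y)}{x-y}$ and integrating each term in the appropriate order works, but which residues are collected depends on whether $\Gamma_A$ lies inside or outside $\Gamma_B$, and in the relevant application $B=A^T$ the two spectra coincide, so this choice must be made and justified explicitly; you have deferred exactly the step where the argument could go wrong. In the density route, the missing ingredient is continuity of $f\mapsto f^{[1]}\{A,B^T\}(C)$ with respect to uniform convergence of $f$ on a compact neighborhood $K$ of $\Lambda(A)\cup\Lambda(B)$: the naive bound from~\eqref{eq:contour} involves $\sup_{\Gamma_A\times\Gamma_B}|f^{[1]}|$, which cannot be controlled by $\sup_K|f|$ near the diagonal $x=y$ without, once again, the single-contour representation of $f^{[1]}$ above (one also needs $\mathbb C\setminus K$ connected for Runge's theorem, which holds here since the spectrum is finite, but should be stated). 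So either route, done carefully, ends up invoking the same identity the paper starts from; I would recommend adopting it directly, at which point the polynomial case becomes unnecessary.
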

\begin{proof}
The assumption of the lemma implies that Assumption~\ref{assumption:holo} is satisfied for $f^{[1]}\{A,B^T\}$ with domains $\Omega_A,\Omega_B$ satisfying
$\overline{\Omega_A\cup\Omega_B}\subset \Omega$. Let $\Gamma \subset \Omega$ be a closed contour enclosing $\Omega_A$ and $\Omega_B$. Combining the contour integral representation~\eqref{eq:contour} with 
\[
 f^{[1]}(x,y) = \frac{1}{2\pi\mathrm{i}} \oint_{\Gamma} \frac{f(z)}{(z-x)(z-y)} \,\text{d}z, \qquad \forall x \in \Omega_A, y \in \Omega_B,
\]
gives
\begin{eqnarray*}
f^{[1]}\{A,B^T\}(C) &=& -\frac{1}{8\pi^3 \mathrm{i}} \oint_{\Gamma_A} \oint_{\Gamma_B} \Big[ \oint_{\Gamma} \frac{f(z)}{(z-x)(z-y)} \,\text{d}z \Big] (xI-A)^{-1} C (yI-B)^{-1}\,\text{d}y\,\text{d}x  \\
&=& -\frac{1}{8\pi^3 \mathrm{i}} \oint_{\Gamma} f(z) \Big[ \oint_{\Gamma_A} \frac{(xI-A)^{-1}}{z-x} \text{d}x \Big] C \Big[ \oint_{\Gamma_B} \frac{(yI-B)^{-1}}{z-y} \text{d}y \Big]\,\text{d}z \\
&=& \frac{1}{2\pi\mathrm{i}} \oint_{\Gamma} f(z) (zI-A)^{-1} C (zI-B)^{-1}\,\text{d}z.
\end{eqnarray*}
On the other hand, we have
\begin{eqnarray*}
 f\left( \begin{bmatrix}
A & C \\
0 & B
         \end{bmatrix}
\right) &=& \frac{1}{2\pi\mathrm{i}} \oint_{\Gamma} f(z)  \begin{bmatrix}
zI-A & -C \\
0 & zI-B
         \end{bmatrix}^{-1}\,\text{d}z \\
&=& \begin{bmatrix}
f(A) & \frac{1}{2\pi\mathrm{i}} \oint_{\Gamma} f(z) (zI-A)^{-1} C (zI-B)^{-1}\,\text{d}z  \\
0 & f(B)
         \end{bmatrix},
\end{eqnarray*} 
which completes the proof.
\end{proof}

\begin{algorithm}
\caption{Arnoldi method for approximating $Df\{A\}(cd^T)$}
\label{alg:arnoldifrechet}
\begin{algorithmic}[1]
\State Perform $k$ steps of the Arnoldi method to compute an orthonormal basis $U_k$ of $\calK_k(A,c)$ and $G_k = U_k^\ast A U_k$, $\tilde c = U_k^* c$.

\State Perform $k$ steps of the Arnoldi method to compute an orthonormal basis $V_k$ of $\calK_k(A^T,d)$ and $H_k = V_k^\ast A^T V_k$, $\tilde d = V_k^* d$.
\State Compute $F = f\left( \begin{bmatrix}
G_k & \tilde c \tilde d^T \\
0 & H_k^T
         \end{bmatrix} \right)$ and set $X_k = F(1:k,k+1:2k)$.

\State Return $U_k X_{k} V_k^T$.
\end{algorithmic}
\end{algorithm}

When applying Algorithm~\ref{alg:arnoldi} to $f^{[1]}$, we can now use Lemma~\ref{lemma:frechet} to address the reduced problem with a standard method for evaluating small and dense matrix functions. This yields Algorithm~\ref{alg:arnoldifrechet}.

The following convergence result is a consequence of Theorem~\ref{theorem:convergence}; the particular structure of $f^{[1]}$ allows us to reduce the bivariate to a univariate polynomial approximation problem. 
\begin{corollary} \label{corollary:finite}
Let $f$ be analytic on a domain $\Omega_A$ containing $\calW(A)$ and let $\mathbb E_A$ be a compact convex set such that
$\calW(A) \subset \mathbb E_A \subset \Omega_A$. Then the output of Algorithm~\ref{alg:arnoldifrechet} satisfies the error bound
\[
\|Df\{A\}(cd^T) - U_k X_{k} V_k^T \|_F \le 2 M \|c\|_2 \|d\|_2 \min_{p \in \Pi_{k-1}} \max_{x \in \mathbb E_A} |f^\prime(x)-p(x)|,
\]
where $M = 1$ if $A$ is normal and $M =\frac{1+\sqrt{2}}{2\pi} \frac{\mathrm{len}(\partial \mathbb E_A)}{d_A(\partial \mathbb E_A)}$ otherwise.
\end{corollary}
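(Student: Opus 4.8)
The plan is to recognize Algorithm~\ref{alg:arnoldifrechet} as Algorithm~\ref{alg:arnoldi} applied to the bivariate function $f^{[1]}$ with $B = A^T$ and $\ell = k$, and then to specialize Theorem~\ref{theorem:convergence} by reducing the bivariate polynomial approximation of $f^{[1]}$ to a univariate approximation of $f'$. First I would verify that the two algorithms produce the same iterate: running the Arnoldi process on $A^T$ with starting vector $d$ yields $H_k = V_k^* A^T V_k$, and applying Lemma~\ref{lemma:frechet} to the block matrix in Step~3 (with the roles $A \mapsto G_k$, $B \mapsto H_k^T$, $C \mapsto \tilde c \tilde d^T$) extracts exactly $X_k = f^{[1]}\{G_k, H_k\}(\tilde c \tilde d^T)$, which is the compressed function of Algorithm~\ref{alg:arnoldi}. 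Thus the output $U_k X_k V_k^T$ coincides with that of Algorithm~\ref{alg:arnoldi} applied to $f^{[1]}\{A, A^T\}(cd^T) = Df\{A\}(cd^T)$.

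Next I would invoke Theorem~\ref{theorem:convergence} for the function $f^{[1]}$. This requires a set for the second argument $A^T$; the key observation is that $\calW(A^T) = \calW(A)$ (since $A^T = \overline{A^*}$ and conjugation reflects the numerical range), so the single convex set $\mathbb E_A$ serves for both arguments, i.e.\ we may take $\mathbb E_B = \mathbb E_A$. The theorem then bounds the error by $2M\|c\|_2\|d\|_2 \inf_{p \in \Pi_{k-1,k-1}} \max_{x,y \in \mathbb E_A} |f^{[1]}(x,y) - p(x,y)|$.

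The core of the argument is the reduction of this bivariate approximation problem to the univariate one in the statement. Given any $r \in \Pi_{k-1}$, I would take an antiderivative $q \in \Pi_k$ (so $q' = r$) and use $p = q^{[1]}$ as the bivariate approximant; since $q^{[1]}(x,y)$ is a polynomial of degree at most $k-1$ in each variable, this is an admissible choice. Writing $e = f - q$ and using the linearity of divided differences, $f^{[1]} - q^{[1]} = e^{[1]}$, and the integral representation $e^{[1]}(x,y) = \int_0^1 e'\big((1-t)y + tx\big)\,dt$ together with the convexity of $\mathbb E_A$ (which guarantees the segment from $y$ to $x$ stays in $\mathbb E_A$) gives the pointwise bound $|e^{[1]}(x,y)| \le \max_{z \in \mathbb E_A} |e'(z)| = \max_{z\in\mathbb E_A}|f'(z) - r(z)|$. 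Taking the infimum over $r$ yields $\inf_{p \in \Pi_{k-1,k-1}}\max_{x,y\in\mathbb E_A}|f^{[1]} - p| \le \min_{r\in\Pi_{k-1}}\max_{z\in\mathbb E_A}|f' - r|$, which is exactly the factor in the claimed bound.

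Finally I would settle the constant $M$. When $A$ is normal, $A^T$ is normal as well, so part~(a) of Lemma~\ref{lemma:normestimate} gives $M = 1$; when $A$ is not normal, part~(c) applies, and because $\mathbb E_B = \mathbb E_A$ and the distance functions coincide, $d_B(\cdot) = d_A(\cdot)$, the minimum over the two arguments collapses to the single expression $\frac{1+\sqrt2}{2\pi}\frac{\mathrm{len}(\partial \mathbb E_A)}{d_A(\partial \mathbb E_A)}$. I expect the main obstacle to be the divided-difference step: justifying the integral representation for complex arguments and confirming that convexity of $\mathbb E_A$ is precisely what is needed to control $|e^{[1]}|$ by $\max|e'|$ — this is where the hypothesis that $\mathbb E_A$ be convex (rather than merely compact and connected, as in Lemma~\ref{lemma:normestimate}) enters in an essential way.
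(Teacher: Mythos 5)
Your proposal is correct and follows essentially the same route as the paper: apply Theorem~\ref{theorem:convergence} to $f^{[1]}\{A,A^T\}$ (using $\calW(A^T)=\calW(A)$ so that $\mathbb E_B=\mathbb E_A$), and reduce the bivariate approximation problem to a univariate one by choosing the approximant $q^{[1]}\in\Pi_{k-1,k-1}$ for an antiderivative $q\in\Pi_k$ of the given univariate polynomial. The one substantive difference is in bounding $|e^{[1]}(x,y)|$ for $e=f-q$: the paper invokes the mean value theorem to produce a $\xi\in\mathbb E_A$ with $e'(\xi)=e^{[1]}(x,y)$, whereas you use the integral representation $e^{[1]}(x,y)=\int_0^1 e'\bigl((1-t)y+tx\bigr)\,\mathrm{d}t$; since the mean value theorem does not hold verbatim for complex analytic functions, your version is the more rigorous one (it yields the needed inequality $|e^{[1]}(x,y)|\le\max_{\xi\in\mathbb E_A}|e'(\xi)|$ directly from convexity of $\mathbb E_A$, without requiring exact attainment). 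Your explicit verification that Algorithm~\ref{alg:arnoldifrechet} is Algorithm~\ref{alg:arnoldi} applied to $f^{[1]}$ via Lemma~\ref{lemma:frechet} is a step the paper leaves implicit, and is worth having.
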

\begin{proof}
 The conditions of the corollary imply that the conditions of Theorem~\ref{theorem:convergence} are satisfied for $f^{[1]}\{A,A^T\}$, which in turn yields
\[
\|f^{[1]}\{A,A^T\}(cd^T) - U_k X_{k} V_k^T \|_F \le 2M \|c\|_2 \|d\|_2 \cdot \inf_{p \in \Pi_{k-1,k-1}} \max_{x,y \in \mathbb E_A} |f^{[1]}(x,y)-p(x,y)|.
\]
For arbitrary $q \in \Pi_{k}$, we let $\tilde p(x,y) := q^{[1]}(x,y) \in \Pi_{k-1,k-1}$ and set $
e:=f-q
$.
By the mean value theorem and convexity of $\mathbb E_A$, for every $x,y\in \mathbb E_A$ with $x\not=y$ there is $\xi \in \mathbb E_A$ such that
\[
e^\prime(\xi) = \frac{e(x)-e(y)}{x-y} = f^{[1]}(x,y) - \tilde p(x,y).
\]
Hence,
\[
 \max_{x,y \in \mathbb E_A} |f^{[1]}(x,y)-\tilde p(x,y)| \le \max_{\xi \in \mathbb E_A} |e^\prime(\xi)| = \max_{\xi \in \mathbb E_A} |f^\prime(\xi)-q^\prime(\xi)|.
\]
Setting $p = q^\prime \in \Pi_{k-1}$ completes the proof.
\end{proof}

Corollary~\ref{corollary:finite} indicates that the convergence of Algorithm~\ref{alg:arnoldifrechet} is similar to the convergence of the standard Arnoldi method for approximating $f(A)c$ and $f(A^T)d$.
Moreover, Corollary~\ref{corollary:finite} allows us to directly apply existing polynomial approximation results derived for studying the convergence of the latter method, such as the ones from~\cite{Beckermann2009,Hochbruck1997}.
\begin{example} \label{example:frechet}
We consider the matrix $A$ and the vector $c$ from Example~\ref{example:timelimited} and measure the error $\|Df\{A\}(cc^T)-F_k\|_2$ of the approximation $F_k = U_kX_kU_k^T$ 
returned by Algorithm~\ref{alg:arnoldifrechet}. This is compared with the error $\|f^\prime(A)c - U_k f^\prime(G_k) \tilde d\|_2$ of the standard Arnoldi approximation for $f^\prime(A)c$.
\begin{figure}
\begin{minipage}{.49\textwidth}
\includegraphics[width=\textwidth]{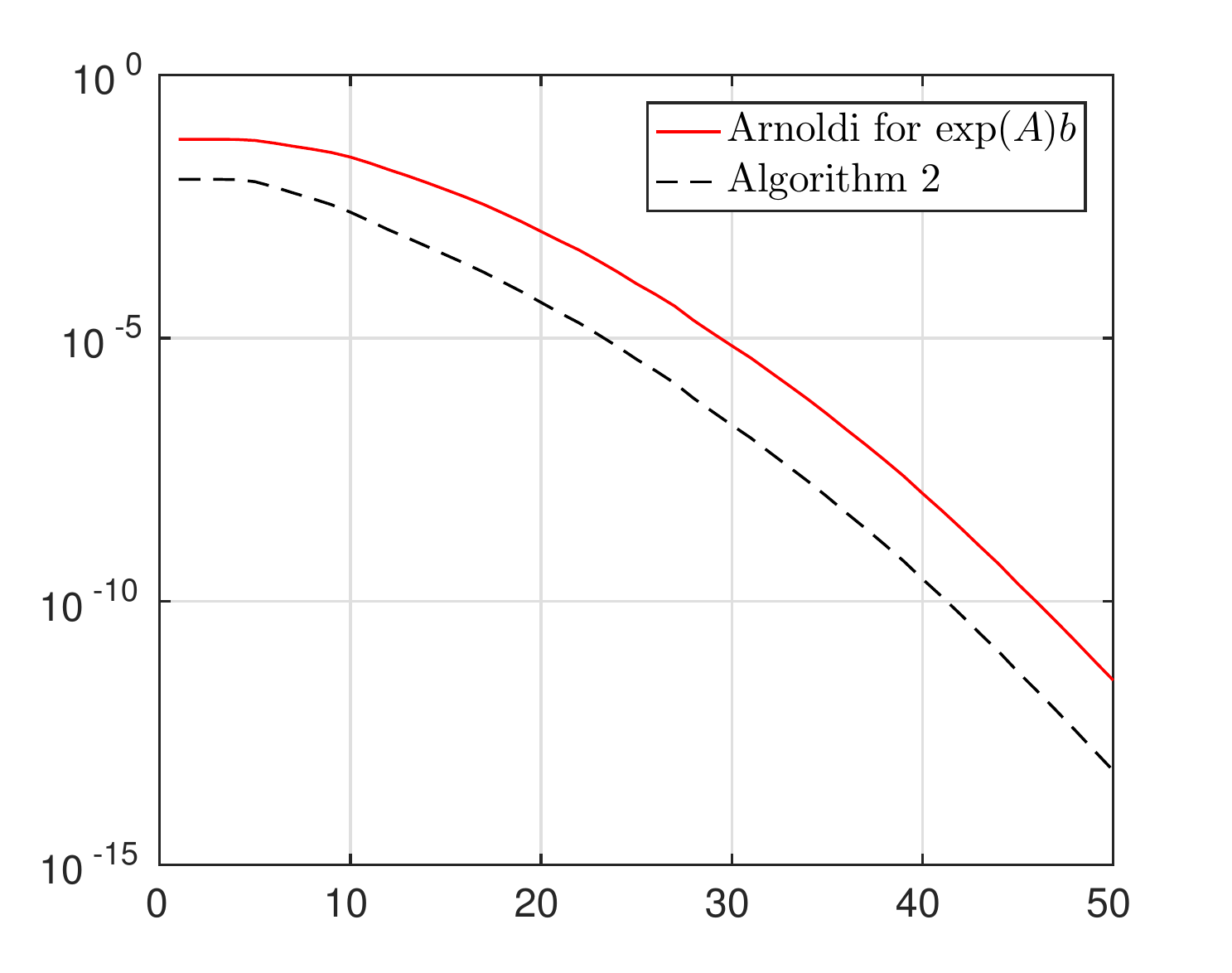}
\end{minipage}
\begin{minipage}{.49\textwidth}
\includegraphics[width=\textwidth]{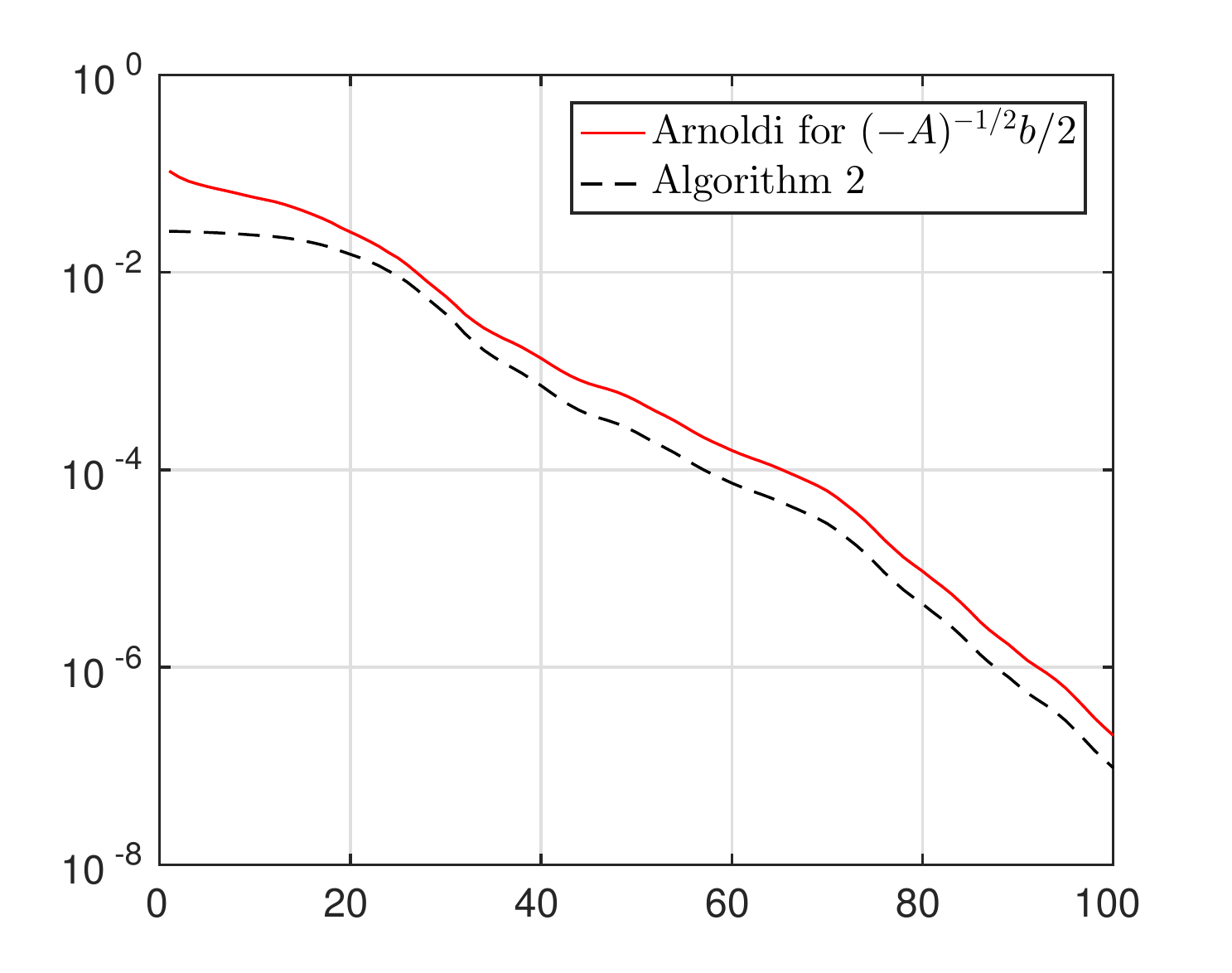}
\end{minipage}
\caption{\label{fig:convergencefrechet} Convergence of Algorithm~\ref{alg:arnoldifrechet} for approximating $Df\{A\}(cc^T)$ and convergence of Arnoldi method for approximating $f^\prime(A)c$ for $f(z) = \exp(z)$ (left plot) and $f(z) = \sqrt{-z}$ (right plot).}
\end{figure}
Figure~\ref{fig:convergencefrechet} demonstrates that both algorithms exhibit the same qualitative convergence behavior.
\end{example}

\section{Outlook}

This work offers numerous opportunities for future work. Most notably, it remains an open problem whether the result of Lemma~\ref{lemma:normestimate} can be established with a constant independent of $A$. Although it is immediate to extend Algorithm~\ref{alg:arnoldi} to rational Krylov subspaces, the implementation of such an approach, in particular the choice of poles, certainly requires further attention.


\begin{paragraph}{Acknowledgments.}
The author thanks Marcel Schweitzer for inspiring discussions on the topic of this work and Christian Lubich for the idea of the proof for Lemma~\ref{lemma:approxphi}.
\end{paragraph}

\bibliographystyle{siam}

\bibliography{anchp}

\appendix

\section{Polynomial approximation of the $\varphi$ function}

The $\varphi$ function, which plays an important role in exponential integrators, is given by $\varphi(z) = (\exp(z)-1)/z$. As $\varphi$ is an entire function, we expect polynomial approximations to converge superlinearly. The following lemma derives such an error bound when considering approximations on an interval $[-4\rho,0]$.
\begin{lemma} \label{lemma:approxphi}
Let $\rho> 0$ and $\varepsilon_k = \min_{p\in \Pi_{k-1}} \max_{z \in [-4\rho,0]} |\varphi(z)-p(z)|$. Then
\begin{eqnarray}
 \varepsilon_k &\le & 40 \frac{\rho^2}{k^3} \exp\left(-\frac{k^2}{5\rho}\right)\qquad \text{for $\sqrt{4\rho} \le k \le 2\rho$}, \label{eq:part1} \\
 \varepsilon_k &\le & \frac{8}{3k-5\rho} \left(\frac{e\rho}{k+2\rho}\right)^k \qquad \text{for $k \ge 2\rho$}. \label{eq:part2} \\
\end{eqnarray}
\end{lemma}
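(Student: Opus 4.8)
The plan is to convert the best-approximation error into a bound on the tail of a Chebyshev expansion of $\varphi$ on $[-4\rho,0]$. Reparametrizing via $z=-2\rho(1-t)$ maps $[-1,1]$ onto $[-4\rho,0]$, so it suffices to approximate $h(t):=\varphi(-2\rho(1-t))$ on $[-1,1]$ by polynomials of degree $k-1$. Writing $h(t)=\tfrac12 a_0+\sum_{n\ge 1}a_n T_n(t)$ in Chebyshev polynomials $T_n$ and truncating at degree $k-1$ shows $\varepsilon_k\le\sum_{n\ge k}|a_n|$. To obtain the coefficients explicitly I would combine the integral representation $\varphi(z)=\int_0^1 e^{sz}\,\mathrm{d}s$ with the Jacobi--Anger identity $e^{a t}=I_0(a)+2\sum_{n\ge 1}I_n(a)T_n(t)$ for the modified Bessel functions $I_n$. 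Since $e^{sz}=e^{-2\rho s}e^{2\rho s\,t}$, integrating in $s$ gives the positive coefficients $a_n=2\int_0^1 e^{-2\rho s}I_n(2\rho s)\,\mathrm{d}s$ and hence the clean bound $\varepsilon_k\le\sum_{n\ge k}a_n$. The two stated regimes will emerge from whether the Bessel index $n$ (hence $k$) lies below or above the argument scale $2\rho$.

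For the supercritical regime $k\ge 2\rho$ of~\eqref{eq:part2}, I would insert the elementary bound $I_n(x)\le \frac{(x/2)^n}{n!}\exp\big(\tfrac{x^2}{4(n+1)}\big)$, which after $u=\rho s$ yields $a_n\le \frac{2}{\rho\,n!}\int_0^{\rho}u^n\exp\big(\tfrac{u^2}{n+1}-2u\big)\,\mathrm{d}u$. When $n\ge 2\rho$ the integrand $u^n e^{-2u}$ is increasing on $[0,\rho]$, so the integral is governed by its right endpoint; estimating it through the incomplete Gamma function produces a factor of order $\rho^{n+1}e^{-2\rho}/(n-2\rho)$. Summing the resulting essentially geometric series in $n$ and applying Stirling to $(n+1)!$ then delivers the factorial rate $(e\rho/(k+2\rho))^{k}$ together with the $1/(3k-5\rho)$-type prefactor: the $e^{-2\rho}$ is exactly what upgrades the naive base $e\rho/k$ to $e\rho/(k+2\rho)$, and the $1/(n-2\rho)$ is what produces the polynomial prefactor.

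The subcritical regime $\sqrt{4\rho}\le k\le 2\rho$ of~\eqref{eq:part1} is the main obstacle, and the bound above is useless there: once $x=2\rho s$ exceeds $n$ the factor $\exp(\tfrac{x^2}{4(n+1)})$ grossly overestimates $I_n$, which in that range behaves like $e^{\sqrt{x^2-n^2}}/(x^2-n^2)^{1/4}$. I would therefore replace it by a sharp large-argument Bessel bound of Debye type, whose exponent $\sqrt{x^2-n^2}-n\,\mathrm{arccosh}(x/n)$ is concave and, after integration against $e^{-2\rho s}$ and a Laplace/saddle-point evaluation, produces genuinely Gaussian decay in $n$. Summing this Gaussian over $n\ge k$ is what converts factorial into sub-Gaussian behaviour and yields the exponent $-k^2/(5\rho)$; the $\sqrt{n}$ from the Gaussian width, together with the summation, accounts for the small polynomial prefactor $\rho^2/k^3$. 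Pinning the clean constants here—the $\tfrac15$ in the exponent and the numerical factor $40$—is the most delicate step, and is also where the left endpoint condition $k\ge\sqrt{4\rho}$ is needed to keep the prefactor bounded.

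As a cross-check on the exponential rates (though not on the sharp prefactors) I would run the Bernstein-ellipse estimate in parallel. Because $\varphi(z)=\int_0^1 e^{sz}\,\mathrm{d}s$ obeys $|\varphi(z)|\le\exp((\mathrm{Re}\,z)^+)$, the maximum of $|h|$ over the ellipse $E_R$ is at most $\exp(\rho(R+R^{-1}-2))$, giving $\varepsilon_k\le 2\exp(\rho(R+R^{-1}-2))/(R^{k-1}(R-1))$ for every $R>1$, where the dominant balance gives $\rho(R-R^{-1})=k-1$. For $k\le 2\rho$ this reproduces a decay $\exp(-2\rho\,\psi((k-1)/2\rho))$ with $\psi(a)=a\,\mathrm{arcsinh}(a)-\sqrt{a^2+1}+1$, and the convexity estimate $\psi(a)\ge\tfrac25 a^2$ recovers the $\exp(-k^2/(5\rho))$ rate, confirming the exponent independently even though the ellipse route alone is too crude to reach the stated polynomial prefactor. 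Finally, some care is needed at the boundary $k=2\rho$, where the $1/(n-2\rho)$ factor must be summed rather than pulled out of the series.
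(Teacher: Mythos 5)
Your main route (Chebyshev coefficients via the Jacobi--Anger expansion, $a_n = 2\int_0^1 e^{-2\rho s} I_n(2\rho s)\,\mathrm{d}s$, and tail summation with Bessel bounds) is genuinely different from the paper's proof and is sound in outline: it would certainly reproduce the correct exponential rates in both regimes. However, the lemma asserts specific constants --- the factor $40\rho^2/k^3$, the exponent $k^2/(5\rho)$, the prefactor $8/(3k-5\rho)$, and the base $e\rho/(k+2\rho)$ --- and your argument does not establish them. The two quantitatively hardest steps are exactly the ones you defer: the Debye-type uniform Bessel bound plus Laplace evaluation in the subcritical regime (where you acknowledge that pinning the $\tfrac15$ and the $40$ is "the most delicate step"), and the incomplete-Gamma/Stirling bookkeeping in the supercritical regime. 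As written, this is a plausible program rather than a proof of the stated inequalities.

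The more instructive point is that the Bernstein-ellipse argument you relegate to a "cross-check" and dismiss as "too crude to reach the stated polynomial prefactor" is, in essence, the paper's actual proof --- and it does reach the prefactors. The loss in your version comes from bounding $|\varphi(z)|\le \exp\bigl((\mathrm{Re}\,z)^+\bigr)$ on the ellipse. The paper instead uses that the maximum of $|\varphi|$ over the shifted ellipse is attained at the rightmost real point $(r+r^{-1}-2)\rho$ and bounds $\varphi(x)\le e^x/x$ there; the retained factor $1/\bigl((r+r^{-1}-2)\rho\bigr)=1/\bigl(\sqrt{k^2+4\rho^2}-2\rho\bigr)\approx 5\rho/k^2$, combined with the geometric-tail factor $1/(1-r^{-1})\le 4\rho/k$ from the standard estimate $\varepsilon_k\le 2\mu(\tilde\varphi,r)\,r^{-k}/(1-r^{-1})$ with $r=\frac{k}{2\rho}+\sqrt{\frac{k^2}{4\rho^2}+1}$, is precisely what produces $40\rho^2/k^3$ in~\eqref{eq:part1} and $8/(3k-5\rho)$ in~\eqref{eq:part2}. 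The remaining work in the paper is then two elementary scalar inequalities verified by calculus. So the quickest repair of your write-up is not to push the Bessel machinery further, but to upgrade your own cross-check by keeping the $1/z$ factor of $\varphi$ at the rightmost ellipse point.
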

\begin{proof}
 We use $x \mapsto (2x-2)\rho$ to map $[-1,1]$ to $[-4\rho,0]$, yielding the equivalent polynomial optimization problem 
 \[
  \varepsilon_k = \min_{p\in \Pi_{k-1}} \max_{x \in [-1,1]} |\tilde \varphi(x)-p(x)|,
 \]
 with $\tilde \varphi(x) := \varphi((2x-2)\rho)$. By~\cite[Theorem 2.2]{Lubich2008}, we have for any $r>1$ that
 \[
  \varepsilon_k \le 2 \mu(\tilde \varphi,r) \frac{r^{-k}}{1-r^{-1}},
 \]
 where
 \begin{eqnarray*}
    \mu(\tilde \varphi,r) & \le & \max_{w\in \mathbb C \atop |w| = r} \left|\tilde \varphi\left(  \left(w+w^{-1} \right)/2 \right) \right| = 
    \max_{w\in \mathbb C \atop |w| = r} \left|\varphi\left( \left(w+w^{-1} -2\right) \rho \right) \right| \\ &=& \left|\varphi\left( \left(r+r^{-1} -2\right) \rho \right) \right| 
    \le  \frac{\exp\left(\left(r+r^{-1} -2\right) \rho \right) }{\left(r+r^{-1} -2\right) \rho}.
 \end{eqnarray*}
The expression $\exp((r+r^{-1} -2)\rho) r^{-k}$ is minimized by setting $r := \frac{k}{2\rho} + \sqrt{\frac{k^2}{4\rho^2}+1}$. Note that $r^{-1} = \sqrt{\frac{k^2}{4\rho^2}+1}-\frac{k}{2\rho}$ and $(r+r^{-1} -2)\rho = \sqrt{k^2+4\rho^2}-2\rho$.

We first discuss the case $\sqrt{4\rho} \le k \le 2\rho$, which in particular implies $\rho\ge 1$.
The inequality 
\begin{equation} \label{eq:inequality}
 \frac{\sqrt{k^2+4\rho^2} - 2\rho}{k} + \log\left( \sqrt{\frac{k^2}{4\rho^2}+1}-\frac{k}{2\rho} \right) \le -\frac{k}{5\rho}
\end{equation}
is shown for $k = \sqrt{4\rho}$ by direct calculation. By differentiating, it is shown that the difference between both sides of~\eqref{eq:inequality} is montonically decreasing for $k \in [\sqrt{4\rho},2\rho]$ and hence the inequality holds for all such $k$.
Using also
\[
 \sqrt{k^2+4\rho^2}-2\rho \ge \frac{k^2}{5\rho}, \qquad 1-r^{-1} \ge \frac{k}{4\rho},
\]
we obtain from~\eqref{eq:inequality} that 
 \[
\mu(\tilde \varphi,r) \frac{r^{-k}}{1-r^{-1}} \le \frac{\exp(\sqrt{k^2+4\rho^2}-2\rho)}{\sqrt{k^2+4\rho^2}-2\rho} \cdot \frac{\exp\left(k\log\left(\sqrt{\frac{k^2}{4\rho^2}+1}-\frac{k}{2\rho}\right)\right)}{1-r^{-1}} 
\le 20 \frac{\rho^2}{k^3} \exp\left(-\frac{k^2}{5\rho}\right),
\]
which completes the proof of~\eqref{eq:part1}.

Similarly, the inequality~\eqref{eq:part2} follows from combining
\[
 \frac{\sqrt{k^2+4\rho^2} - 2\rho}{k} + \log\left( \sqrt{\frac{k^2}{4\rho^2}+1}-\frac{k}{2\rho} \right) \le \log(e\rho)-\log(k+2\rho)
\]
with
\[
 (\sqrt{k^2+4\rho^2}-2\rho)(1-r^{-1}) \ge \frac34 k-\frac54 \rho,
\]
which hold for $k\ge 2\rho$.
\end{proof}

Compared to the corresponding bounds for the exponential~\cite[Theorem 2]{Hochbruck1997}, the bounds of Lemma~\ref{lemma:approxphi} are lower for larger $k$, primarily because they benefit from the additional factor $\mathcal O(1/k)$ due to the slower growth of the $\varphi$ function. Additionally, the factor $\big(\frac{e\rho}{k+2\rho}\big)^k$ in~\eqref{eq:part2} seems to be better than the corresponding factor $\exp(-\rho) \big(\frac{e\rho}{k}\big)^k$ ~\cite[Eqn. (14)]{Hochbruck1997}. This improvement can probably be carried over to the exponential. Figure~\ref{fig:convphi} illustrates the differences between the bounds.
\begin{figure}
\begin{minipage}{.49\textwidth}
\includegraphics[width=\textwidth]{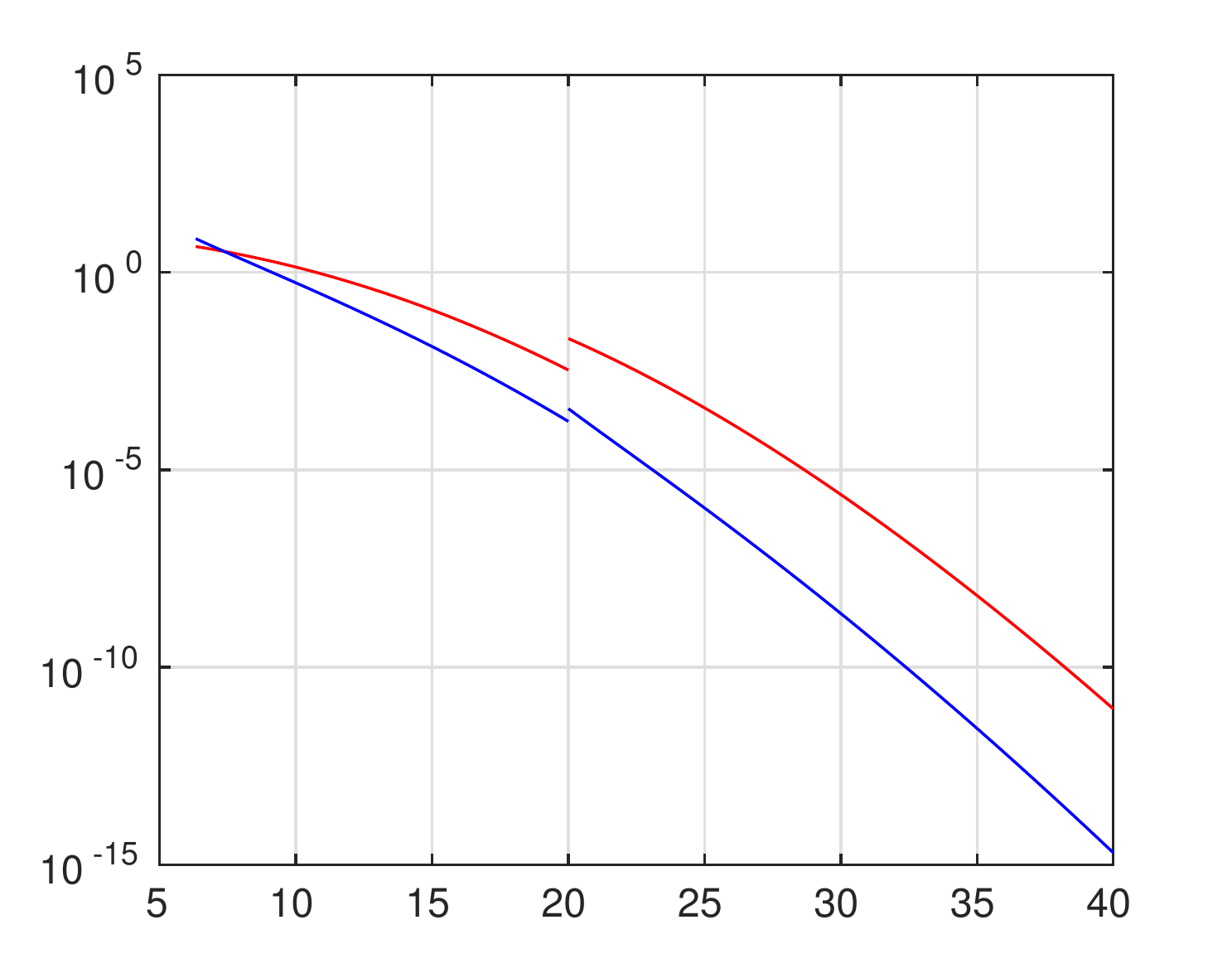}
\end{minipage}
\begin{minipage}{.49\textwidth}
\includegraphics[width=\textwidth]{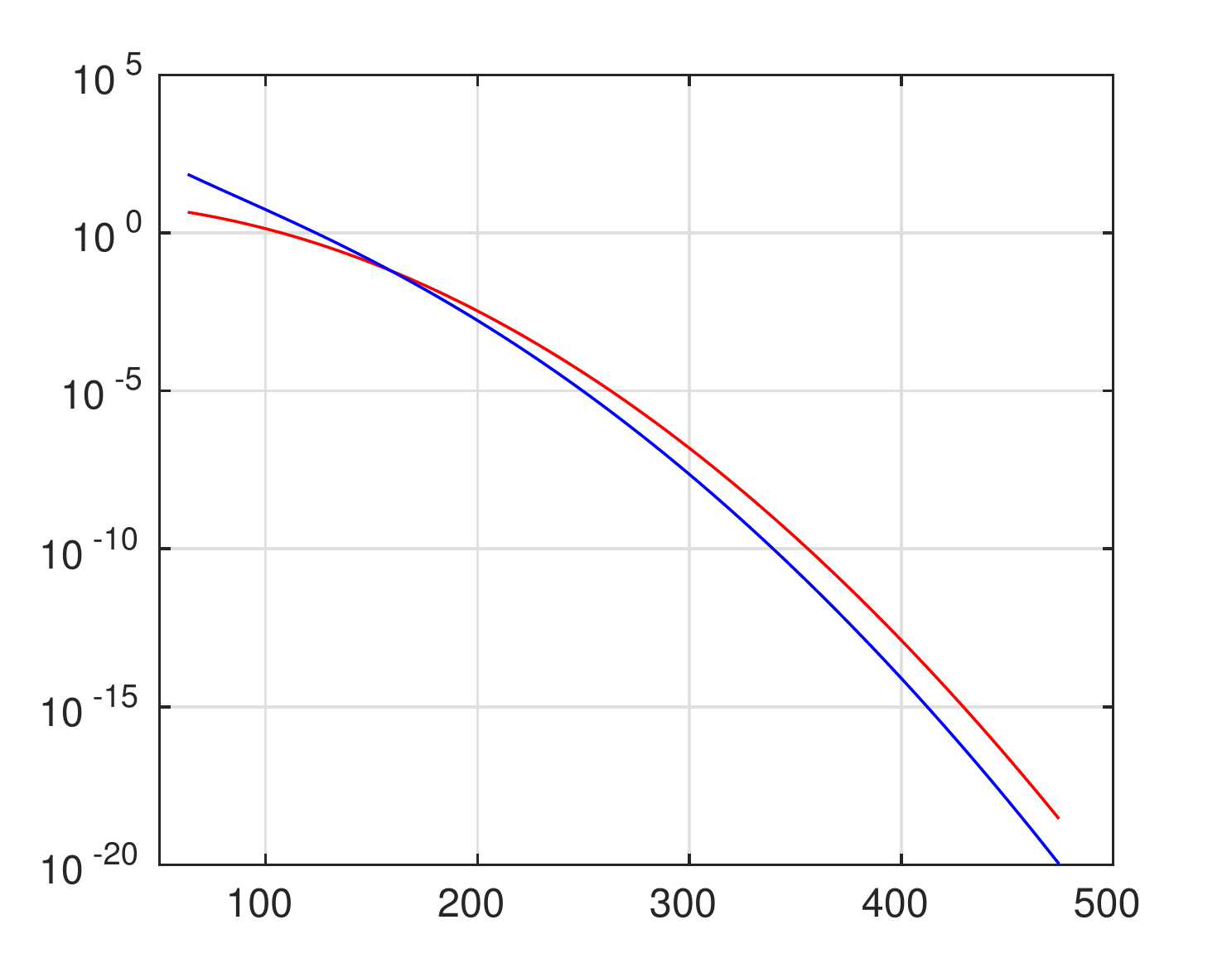}
\end{minipage}
\caption{\label{fig:convphi} Bounds of Lemma~\ref{lemma:approxphi} for the polynomial approximation of the $\varphi$ function (in blue) and bounds of~\cite[Theorem 2]{Hochbruck1997} for the polynomial approximation of the exponential function (in red). Left plot: $\rho = 10$. Right plot: $\rho = 1000$.}
\end{figure}

\end{document}